\newtheorem{theorem}{Theorem}[section]
\numberwithin{equation}{section}
\newtheorem{definition}[theorem]{Definition}
\newtheorem{example}[theorem]{Example}
\newtheorem{lemma}[theorem]{Lemma}
\newtheorem{proposition}[theorem]{Proposition}
\newtheorem{remark}[theorem]{Remark}
\newtheorem*{Theorem*}{Theorem}
\newtheorem{maintheorem}{Theorem}
\newtheorem{maincorollary}[maintheorem]{Corollary}
\newcommand{\cmc}{\begin{maincorollary}}
	\newcommand{\fmc}{\end{maincorollary}}
\renewcommand{\varepsilon}{\epsilon}
\newcommand{\m}{m}
\DeclareMathOperator{\Cor}{Cor}
\title[Decay of Correlations via WGM for non-H\"{o}lder observables]
{Decay of Correlations via induced Weak Gibbs Markov maps for non-H\"{o}lder observables}
\author[A. Ullah]{Asad Ullah}
\address{Centro de Matem\'atica e Aplica\c{c}\~oes (CMA-UBI), Universidade da Beira
	Interior, Rua Marqu\^es d'\'Avila e Bolama, 6201-001, Covilh\~a, Portugal.}
\email{asad.ullah@ubi.pt}
\author[H. Vilarinho]{Helder Vilarinho}
\email{helder@ubi.pt}
\date{\today}
\keywords{Weak Gibbs Markov map; Young Tower; Decay of Correlations; Central Limit Theorem; Large Deviations}
\subjclass{37A05, 37A25, 37A50}
\thanks{A. Ullah and H. vilarinho were partially supported by Funda\c{c}\~ao para a Ci\^encia e a Tecnologia (FCT) 
	through Centro de Matemática e Aplicações (CMA-UBI), Universidade da Beira Interior, under the project
	UIDB/00212/2020. A. Ullah was also supported by FCT under the grant number UI/BD/150796/2020.}
\begin{document}
\begin{abstract}
We extend the results of \cite{our 1st article} by considering larger classes of observables. More precisely, we obtain estimates on the decay of correlations, Central Limit Theorem and Large Deviations for dynamical systems having an induced weak Gibbs Markov map, for larger classes of observables with weaker regularity than H\"{o}lder.
\end{abstract}

 \maketitle 

\section{Introduction}
 Statistical properties for H\"{o}lder observables are well understood for a large variety of dynamical systems. This includes the decay of correlations, central limit theorem (CLT), large deviations, almost sure invariance principle, etc. For instance, in \cite{Y98, Y99} the decay of correlation and CLT are obtained for nonuniformly hyperbolic systems. The large deviations and almost sure invariance principles for nonuniformly expanding systems are discussed in \cite{Invariance Principle via full return induced GM, Large deviation via full return induced GM}. We also refer to \cite{D.Corrlation for Henon map, Decay of correlation via full return induced GM map, Billiards with poly-mixing, Statistical properties via full return induced GM map, Gouezel reproducing full return GM} where these statistical properties for distinct classes of dynamical systems are analysed. Note that in all of the above references, observables are assumed to be H\"{o}lder.

There are several works dealing with classes of observables strictly larger than H\"{o}lder. For example, the mixing rate of the equilibrium state of one-sided shift is discussed in \cite{BFG} for non-H\"{o}lder potential of summable variations, while the results for two-sided subshift of finite type are discussed in \cite{ Policott}.
In addition, the polynomial decay of correlation and CLT are obtained in \cite{FL} for the equilibrium state of a one-sided shift map on two symbols with non-H\"{o}lder potential. We also would like to mention some work beyond shift spaces. In \cite{PY} a class of observables which contains all piecewise Lipschitz functions is considered, in order to get the decay of correlation for
certain non-uniformly expanding systems. Estimates for the rates of mixing for observables with weaker regularity than H\"{o}lder  are given in~\cite{D}  for one-dimensional expanding Lorenz-like map. We refer to \cite{R,Z} for invertible maps with non-H\"{o}lder observables. More generally, in \cite{Lynch paper}, decay of correlations and CLT are obtained for those dynamical systems that admit an induced full branch map, referred to as Gibbs Markov map, for much larger classes of observables. More precisely, the results obtained in \cite{Lynch paper} are an extension of \cite{Y99} in the sense of considering bigger classes of observables. 

In \cite{our 1st article}, the authors generalised the results of \cite{Y99} under weaker assumptions on the dynamical system, where the induced map was not necessarily full branch, that we refer to as a weak Gibbs Markov (WGM) map; see Definition \ref{def:WGM}. 
 Our goal in the present work is to extend the results of \cite{our 1st article} by taking the bigger classes of observables considered in \cite{Lynch paper}. Our approach is to consider a mixing tower built over an induced WGM map, and then transfer the information obtained to the original dynamical system. We make an extension of the coupling arguments used in \cite{our 1st article} in order to obtain the decay of correlations for the tower system for larger classes of observables. In addition, we obtain CLT and large deviation results as an immediate corollary.

This article is organised as follows:
In Section \ref{V.relvant def and main results}, we give the necessary definitions and state the main results. In Section \ref{V.se:dc on tower}, we obtain the decay of correlations for the tower map and transfer them to the original dynamical system.
 
\section{Preliminaries and statement of main results}\label{V.relvant def and main results}
Consider a finite measure space $(\Delta_0,\mathcal A,\m)$, with $m(\Delta_{0})>0$, a measurable map $ F: \Delta_{0}\rightarrow \Delta_{0}$ and an ($\m$ mod 0) finite or countable partition $\mathcal{P}_{0}$ of $ \Delta_{0} $ into invertibility  domains of $F$, that is, $F$ is a bijection from each $\omega\in \mathcal{P}_{0}$ to $F(\omega)$, with measurable inverse.
For each $n\geq0$ set $ F^{-n}(\mathcal{P}_{0})=\{F^{-n}(\omega): \omega\in \mathcal{P}_{0}\}$. We define
\begin{equation*}
	\mathcal{P}_{0}^{n}=\bigvee_{i=0}^{n-1}F^{-i}(\mathcal{P}_{0}), \text{ for $n\geq 1$},	\,\,\text { and } \,\,\mathcal P_0^\infty=\bigvee_{i=0}^{\infty}F^{-i}(\mathcal{P}_{0}).
\end{equation*} 
We call the sequence $(\mathcal P_0^n)_{n}$ a \textit{basis of  $\Delta_{0}$} if it generates $\mathcal{A}$ ($\m$ mod 0) and $\mathcal P_0^\infty$ is the partition into single points.

\begin{definition}\label{def:WGM}
	We say that $F: \Delta_{0}\rightarrow \Delta_{0}$ is a weak Gibbs Markov (WGM) map, with respect to the  partition $\mathcal{P}_{0}$, if the following hold:
	\begin{enumerate}[W$1$)]
		\item \label{def:WGM:Markov} Markov: $F$ maps each $\omega\in \mathcal{P}_{0}$ bijectively to an ($\m$ mod 0) union of elements of $\mathcal{P}_{0}$.
		
		\item Separability: the sequence $(\mathcal P_0^n)_{n}$ is a basis of  $\Delta_{0}$.

		\item Nonsingular: there exists a strictly positive measurable function $J_{F}$ defined on $\Delta_0$ such that for each $A\subset\omega\in \mathcal{P}_{0}$,
		\begin{equation*}
			m(F(A))=\int_{A}J_{F}dm.
		\end{equation*}
		\item Gibbs: there exist $C_{F}>0$ and $0<\beta<1$ such that for all  $\omega\in \mathcal{P}_{0}$ and $x,y\in \omega$,
		\begin{equation*}
			\log\frac{J_{F}(x)}{J_{F}(y)}\leq C_{F}\beta^{s(F(x), F(y))},
		\end{equation*}
		where 
		$$s(x,y)=min\{n\geq0:F^{n}(x) \mbox{ and } F^{n}(y) \mbox{ lie in distinct elements of } \mathcal{P}_{0} \}.$$
		\item Long branches: there exists $ \delta_{0}>0 $ such that $ m(F(\omega))\geq \delta_{0} $, for all $ \omega\in \mathcal{P}_{0}$.
	\end{enumerate}
\end{definition}
The term \emph{weak} in the above definition refers that in W\ref{def:WGM:Markov}) we do not require \textit{full branch} (that is, $F(\omega)=\Delta_0$ for all $\omega\in\mathcal P_0$). In the case of full branch, $F$ is called a \textit{Gibbs Markov} map. This terminology follows \cite{Alves Book}.
\medskip

Consider a measure space $(M,\mathcal B,\m)$, a measurable map $f: M\rightarrow M$ and $\Delta_{0}\subset M$ with $m(\Delta_{0})>0$. For simplicity we denote the restriction of $\m$ to $\Delta_0$ also by $\m$. We say that $ F:\Delta_{0}\rightarrow \Delta_{0} $ is an \emph{induced map for $f$} if there exists
a countable ($\m$ mod 0) partition $\mathcal{P}_{0}$ of $\Delta_{0}$ and a measurable function $R:\Delta_{0}\rightarrow \mathbb{N}$, constant on each element of $\mathcal{P}_{0}$, such that 
$$F|_{\omega}=f^{R(\omega)}|_{\omega}.$$ 
We formally denote the induced map $F$ by $f^{R}$.
We say that an induced map $f^R\colon\Delta_{0}\rightarrow\Delta_{0}$ is  \emph{aperiodic} if for all  $\omega_{1}, \omega_{2}\in\mathcal{P}_{0}$ there exists  $k_{0}\in \mathbb{N}$ such that  for all  $n\geq k_{0}$,
\begin{equation*}
	m(\omega_{1}\cap (f^R)^{-n}(\omega_{2}))>0.
\end{equation*}
We say that an induced map $f^R$ has a \emph{coprime block} if there exist $N\geq2$ and $ \omega_{1},\omega_{2},...,\omega_{N}\in\mathcal{P}_0 $
such that $\gcd\{R(\omega_i)\}_i=1$ and for all $i=1,\ldots,N$,
\begin{equation*}
	f^{R}(\omega_{i})\supseteq\omega_{1}\cup\omega_{2}\cup\cdots\cup\omega_{N}\,\,\,(\m\text{ mod } 0).
\end{equation*}

Assume moreover that $M$ is a metric space with metric $d$. 
We say that an induced map $f^{R}$ is \emph{expanding} if there are $0 < \beta < 1$ and  $C>0$ such that, for all $\omega \in \mathcal{P}_0$ and $x,y \in \omega$
\begin{itemize}
	\item[i)] $d(f^{R}(x), f^{R}(y)) \leq C \beta^{s(x,y)}$,
	\item[ii)] $d(f^{j}(x), f^{j}(y)) \leq Cd(f^{R}(x), f^{R}(y)),$ for all $0 \leq j \leq R.$
\end{itemize}
 For a given function $\varphi: M \rightarrow \mathbb{R}$ and $\epsilon>0$ we set
$$
\mathcal{R}_{\epsilon}(\varphi):=\sup \{|\varphi(x)-\varphi(y)|: d(x, y) \leq \epsilon\}.
$$
We consider the following classes of observables which were defined in \cite{Lynch paper}:
\begin{itemize}
	\item  $(R 1, \tau) =\{ \varphi : \mathcal{R}_{\epsilon}(\varphi)=\mathcal{O}(\epsilon^\tau)\}$, $\tau \in(0,1)$.

\item  $(R 2, \tau)=\{\varphi :\mathcal{R}_{\epsilon}(\varphi)=\mathcal{O}(\exp \{-|\log \epsilon|^\tau\})\}$, $\tau \in(0,1)$.

\item  $(R 3, \tau)=\{\varphi : \mathcal{R}_{\epsilon}(\varphi)=\mathcal{O}(\exp \{-(\log |\log \epsilon|)^\tau\})\}$, $  \tau>1 $.

\item  $(R 4, \tau)=\{ \varphi : \mathcal{R}_{\epsilon}(\varphi)=\mathcal{O}(|\log \epsilon|^{-\tau})\}$, $ \tau>1 $.
\end{itemize}
The correlation sequence of two observable functions  $\varphi \in\mathcal{H}\subseteq L^\infty(M,\m)$ and $\psi \in L^\infty(M,\m)$, with respect to an $f$-invariant probability $\mu$, is defined by
\begin{equation*}
	\Cor_{\mu}(\varphi, \psi \circ f^{n})=\left|\int\varphi(\psi\circ f^{n}) d\mu-\int\varphi d\mu\int\psi d\mu\right|.
\end{equation*}

\begin{maintheorem}\label{V.teo:main}
Consider a measure space $(M,\mathcal B,\m)$, endowed with some metric, and a measurable map $f: M\rightarrow M$ satisfying $f_{*}m \ll m$. Let $f^{R}: \Delta_{0} \rightarrow \Delta_{0}$ be an aperiodic induced WGM expanding map with a coprime block, $R \in L^{1}(\m)$ and let $\mu$ be the unique ergodic $f$-invariant probability measure  $\mu$ such that $\mu \ll m$ and $\mu(\Delta_{0})>0$. Then,
	\begin{enumerate}
		\item If $m\{ R >n \} \leq C n^{-a}$ for some $C>0$ and $a >1$, given $ \kappa>0$, there is $0<\zeta<1$ such that for all $\varphi \in (R 4, \tau)$, with $ \tau>\frac{2}{\zeta}$ and $ \mathcal{R}_{\infty}(\varphi)< \kappa$, and for all $\psi \in L^\infty(M,\m)$ we have
		\begin{itemize}
		\item[i)] if $\tau=\frac{a+1}{\zeta} $, there is some $C{'} > 0$ such that
		$$\Cor_{\mu}(\varphi, \psi \circ f^{n}) \leq C^{'} \left(n^{1-a} \log n\right);$$
	\item[ii)] otherwise, there is some $C{'} > 0$ such that
	$$ \Cor_{\mu}(\varphi, \psi \circ f^{n}) \leq C^{'}\max \left\{n^{1-a}, n^{2-\zeta \tau}\right\}.$$
\end{itemize}
\item If $m\{ R >n \} \leq C e^{-cn}$ for some $C,c>0$, then 
	\begin{itemize}
		\item[i)] for all $\varphi \in (R 1, \tau)$ and $\psi \in L^\infty(M,\m)$ there are  $C{'},c' > 0$ such that
		$$\Cor_{\mu}(\varphi, \psi \circ f^{n}) \leq C^{\prime}e^{-c^{\prime}n};$$
	\item[ii)] for all $\varphi \in (R 2, \tau)$ and $\psi \in L^\infty(M,\m)$ there is  $C{'} > 0$ such that, for every $\tau^{\prime}<\tau$,
	$$\Cor_{\mu}(\varphi, \psi \circ f^{n}) \leq C^{\prime}e^{-n^{\tau^{\prime}}};$$
	\item[iii)] for all $\varphi \in (R3, \tau)$ and $\psi \in L^\infty(M,\m)$ there is  $C{'} > 0$ such that, for every $\tau^{\prime}<\tau$,
	$$\Cor_{\mu}(\varphi, \psi \circ f^{n}) \leq C^{\prime}e^{-(\log n)^{\tau^{\prime}}};$$
\item[iv)] for given $\kappa>0 $ there is $ 0<\zeta<1 $ such that for all $\varphi \in (R 4, \tau)$, with $ \tau>\frac{1}{\zeta}$ and $ \mathcal{R}_{\infty}(\varphi)< \kappa$, and for all $\psi \in L^\infty(M,\m),$ there is  $C{'} > 0$ such that
$$\Cor_{\mu}(\varphi, \psi \circ f^{n}) \leq C^{\prime}n^{1-\zeta\tau}.$$
\end{itemize}
\end{enumerate}
\end{maintheorem}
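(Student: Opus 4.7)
The plan is to lift the problem to a Young tower built over the induced map and transfer estimates back to $M$. Concretely, set $\Delta = \{(x,\ell) : x \in \Delta_0,\ 0 \le \ell < R(x)\}$ with tower map $F(x,\ell)=(x,\ell+1)$ if $\ell+1<R(x)$ and $F(x,R(x)-1)=(f^R(x),0)$, and reference measure $\tilde m$ defined level by level from $m$. Since $R\in L^1(m)$, $F$ admits a unique $F$-invariant probability $\tilde\mu\ll\tilde m$; the aperiodicity together with the coprime block ensures the mixing of $(F,\tilde\mu)$, which is what the coupling scheme needs. The natural projection $\pi\colon \Delta\to M$, $\pi(x,\ell)=f^\ell(x)$, semiconjugates $F$ to $f$ and pushes $\tilde\mu$ to $\mu$. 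Setting $\tilde\varphi=\varphi\circ\pi$, $\tilde\psi=\psi\circ\pi$ gives
\[
\Cor_\mu(\varphi,\psi\circ f^n)=\Cor_{\tilde\mu}(\tilde\varphi,\tilde\psi\circ F^n),
\]
so the entire task reduces to bounding decay of correlations on the tower.

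On $\Delta$ the natural way to measure regularity of $\tilde\varphi$ is through the separation time $s(\cdot,\cdot)$ of the induced map, extended to $\Delta$ so that points in different levels have $s=0$. Using the expanding condition i)--ii) of $f^R$ one gets $d(\pi(x,\ell),\pi(y,\ell))\le C\beta^{s(x,y)}$, and substituting $\epsilon\sim \beta^{s}$ into the definitions of $(Rk,\tau)$ yields quantitative moduli of continuity for $\tilde\varphi$ along stable classes of the form $\beta^{\tau s}$ for $(R1,\tau)$, $\exp(-s^\tau)$ for $(R2,\tau)$, $\exp(-(\log s)^\tau)$ for $(R3,\tau)$, and $s^{-\tau}$ for $(R4,\tau)$. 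These are the inputs to the coupling.

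Now I would adapt the coupling argument of \cite{our 1st article}, itself a WGM-version of Young's scheme, to this wider class of observables in the spirit of \cite{Lynch paper}. The idea is to compare the densities obtained by transporting $\tilde\varphi\,d\tilde m$ and $\tilde\mu$, coupling matched mass on elements of the dynamically refined partition $(F^R)^{-k}\mathcal P_0^k$, and decomposing the error into (a) a \emph{tail term} coming from the return-time tail $m\{R>n\}$, which contributes $n^{1-a}$ in the polynomial case and $e^{-cn}$ in the exponential case, and (b) a \emph{residual oscillation term} measuring how much $\tilde\varphi$ still varies on coupled cells of separation time $\gtrsim n^\zeta$. Optimising the split between these two terms produces the rates in each of the cases of the theorem: for exponential tails the tail dominates when $\varphi\in(R1)$ and the oscillation dominates for $(R2),(R3),(R4)$, giving i)--iv) of Case~2; for polynomial tails the competition between $n^{1-a}$ and $n^{2-\zeta\tau}$ yields $\max\{n^{1-a},n^{2-\zeta\tau}\}$, with the logarithmic factor $n^{1-a}\log n$ appearing precisely at the resonant exponent $\tau=(a+1)/\zeta$ from summing a dyadic block in which the two error sources equilibrate. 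Once on the tower, the identity of the first paragraph transfers the estimates to $(f,\mu)$.

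The main obstacle is the coupling step itself in the very weak regularity class $(R4,\tau)$. In \cite{Lynch paper} the full-branch hypothesis allows mass starting from one element of $\mathcal P_0$ to be compared directly with $\tilde\mu$ after a single return, which makes the oscillation bookkeeping transparent. In the WGM setting only a \emph{part} of $\Delta_0$ is covered after one return, so coupling must be distributed across several returns using the coprime block to guarantee overlap; simultaneously one must estimate the growth of the modulus of continuity of $\tilde\varphi$ along these extra returns. The need to truncate $\varphi$ at scale $\kappa$ and to balance the coupling schedule against the slow (logarithmic) regularity of $(R4,\tau)$ observables is what produces the auxiliary parameter $\zeta$ and the thresholds $\tau>2/\zeta$ and $\tau>1/\zeta$ appearing in the statement; getting this delicate balance right in the non-full-branch setting is the technical core of the proof.
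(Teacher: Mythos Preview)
Your overall strategy---lift to the tower, transfer regularity via the expanding property (the paper's Lemma~\ref{V.lemma: phi circ pi}), reduce to convergence to equilibrium $|T_*^n\lambda-\nu|$ via a normalised density $\varphi^*$ (the paper's Lemma~\ref{V.D.Correlation concection of T and convergence of equlibrium} and Lemma~\ref{V.lemma:phi*}), and run a WGM coupling on the product tower---is exactly the paper's route. The ``WGM versus full branch'' issue you flag is indeed handled by the machinery of \cite{our 1st article} (simultaneous return $S$ on $\Delta\times\Delta$, stopping times $\tau_i$, coprime block) and is not the new difficulty here.

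Where your description diverges from the actual mechanism is the heart of the coupling for non-H\"older observables. The paper does \emph{not} split the error into two terms and optimise; instead, following \cite{Lynch paper}, it replaces the fixed matching fraction $\epsilon$ of the H\"older case by a \emph{variable} sequence $\epsilon_i=\bar\delta\,\epsilon_i'$ chosen so that $\prod_{j\le i}(1-\epsilon_j/D_4)=\mathcal O\!\big(\max(v_i(\Phi)^{\zeta},\theta^i)\big)$ with $\zeta=\bar\delta/D_4$ (Lemma~\ref{ Chosing a sequence of epsilon(i)} and Lemma~\ref{V.L34}). This yields the master estimate (Proposition~\ref{V.first Matching Proposition})
\[
|T_*^n\lambda_1-T_*^n\lambda_2|\le 2P\{S>n\}+D_5\sum_{i=1}^{n}\Big(\prod_{j=1}^{i}\big(1-\tfrac{\epsilon_j}{D_4}\big)\Big)(i+1)\,(m\times m)\Big\{S>\tfrac{n}{i+1}\Big\},
\]
and the rates arise by \emph{summing} this series in each case, not by choosing a cut-off; the $\log n$ at $\tau=(a+1)/\zeta$ is simply the harmonic sum $\sum i^{-1}$. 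Your phrase ``coupled cells of separation time $\gtrsim n^\zeta$'' misplaces $\zeta$: it is the exponent $\bar\delta/D_4$ in $v_i(\Phi)^{\zeta}$, determined by $C_\Phi$ and $v_0(\Phi)$, not a separation-time threshold. Likewise $\kappa$ is not a truncation scale but an a priori bound on $\mathcal R_\infty(\varphi)$ (hence on $v_0(\Phi)$), needed so that $\bar\delta,D_4$---and therefore $\zeta$---can be chosen uniformly before $\varphi$ is specified. Without the variable-$\epsilon_i$ device your proposed two-term optimisation would not produce the stated rates for the slow class $(R4,\tau)$.
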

We remark that the existence of such measure $ \mu $ is proved in \cite{our 1st article}. The proof of remaining parts of Theorem~\ref{V.teo:main} can be deduced from Proposition \ref{V.Convergence to equilibrium} (see Remark~\ref{main thm.p}).
\medskip

Let $ \mu $ be an ergodic $ f$-invariant probability measure. We say that an observable $ \varphi: M \rightarrow \mathbb{R} $ with $ \int\varphi d\mu=0 $ satisfies the \textit{CLT} if $ \frac{1}{\sqrt{n}}\sum_{i=0}^{n-1} \varphi \circ f^{i} $ converges in law (or in distribution) to a normal distribution $ \mathcal{N}(0,\sigma)$, for some $ \sigma>0.$ We may also consider observables of non-zero expectation by replacing $ \varphi $ with $ \varphi-\int \varphi d\mu$. In this situation an observable $ \varphi $ satisfies the CLT if there exists $ \sigma>0 $ such that for every interval $ J\subset \mathbb{R} $,
\begin{equation*}
	\mu \left\{ x : \frac{1}{\sqrt{n}}\sum_{i=0}^{n-1}\left(\varphi( f^{i}(x))- \int \varphi d\mu \right)\in J \right\} \rightarrow \frac{1}{\sigma \sqrt{2\pi}}\int_{J}e^{-\frac{t^2}{2\sigma^2}}dt, \text{ as } n\rightarrow \infty.
\end{equation*}
A function $ \varphi $ is \textit{coboundary} if there exists a measurable function $ g $   such that $ \varphi\circ f=g\circ f-g$.  
\begin{maincorollary}[Central Limit Theorem]\label{V.CLT Main Cror}
	Consider a measure space $(M,\mathcal B,\m)$, endowed with some metric, and a measurable map $f: M\rightarrow M$ satisfying $f_{*}m \ll m$. Let $f^{R}: \Delta_{0} \rightarrow \Delta_{0}$ be an aperiodic induced WGM expanding map with a coprime block such that $R \in L^{1}(\m)$, and let $\mu$ be the unique ergodic $f$-invariant probability measure  $\mu$ such that $\mu \ll m$ and $\mu(\Delta_{0})>0$. 
	If $m\{ R >n \} \leq C n^{-a}$ for some $C>0$ and $a >2$, then the CLT is satisfied for all $\varphi \in (R 4, \tau)$, some $ \tau>0$ sufficiently large if and only if $\varphi$ is not coboundary.
\end{maincorollary}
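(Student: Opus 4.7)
The plan is to derive the CLT as a direct consequence of the polynomial decay of correlations in Theorem~\ref{V.teo:main}(1), via the tower-based martingale approximation used in \cite{Y99,Lynch paper}, and then handle the coboundary dichotomy in the standard way. After replacing $\varphi$ by $\varphi-\int\varphi\,d\mu$ (which preserves the class $(R4,\tau)$ and the bound on $\mathcal R_{\infty}$), I may assume $\int\varphi\,d\mu=0$. The hypothesis $\mathcal R_{\infty}(\varphi)<\kappa$ forces $\varphi\in L^{\infty}(M,m)$, so $\varphi$ is an admissible second factor in the correlation estimate. For the given $\kappa$, let $\zeta\in(0,1)$ be the constant provided by Theorem~\ref{V.teo:main}(1), and choose $\tau$ sufficiently large that $\tau>2/\zeta$ and $\tau>(a+1)/\zeta$. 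Applying Theorem~\ref{V.teo:main}(1)(ii) with $\psi=\varphi$ gives
$$\Cor_{\mu}(\varphi,\varphi\circ f^{n})\;\leq\;C'\,n^{1-a},$$
which is summable in $n$ since $a>2$.

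Having established summable correlations, I would follow the tower strategy developed in \cite{Y99} and adapted in \cite{Lynch paper}: lift $\varphi$ to the Young tower $\Delta$ built over $\Delta_{0}$ (the same tower used in Section~\ref{V.se:dc on tower} for the proof of Theorem~\ref{V.teo:main}), prove the CLT for the tower map by a Gordin--Lifshits-type martingale approximation with respect to its transfer operator, and transfer the limit law to $(M,f,\mu)$ through the canonical semiconjugacy $\pi\colon\Delta\to M$. The condition $a>2$ is precisely the threshold that makes the martingale approximation converge on the tower (via Kac's formula applied to the induced WGM map, whose strong mixing provides the needed spectral control). The outcome is convergence in distribution of $\frac{1}{\sqrt n}\sum_{i=0}^{n-1}\varphi\circ f^{i}$ to $\mathcal N(0,\sigma)$, where
$$\sigma^{2}\;=\;\int\varphi^{2}\,d\mu+2\sum_{n\geq 1}\int\varphi\,(\varphi\circ f^{n})\,d\mu\;\in\;[0,\infty),$$
the series being absolutely convergent by the previous display.

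For the dichotomy, the easy direction is classical: if $\varphi=g\circ f-g$ for some measurable $g$, then $S_{n}\varphi=g\circ f^{n}-g$ has the same $\mu$-distribution for every $n$, so $S_{n}\varphi/\sqrt n\to 0$ in probability and no nondegenerate normal limit is possible, i.e.\ the CLT fails in the sense of the Corollary. Conversely, if $\sigma=0$, the standard Leonov--Gordin argument produces $g:=\sum_{n\geq 0}P^{n}\varphi$, with $P$ the transfer operator of $f$ relative to $\mu$: this series converges in $L^{2}(\mu)$ thanks to the tower-based summability established above, and a direct calculation gives $\varphi=g\circ f-g$ in $L^{2}(\mu)$. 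The main obstacle I expect is the last step, namely ensuring that the $L^{2}$-convergence of $\sum P^{n}\varphi$ can be obtained at the borderline regime $a>2$; this is where the tower structure must be exploited delicately, using the induced spectral gap rather than relying on summability of $\|P^{n}\varphi\|_{2}$ derived naively by interpolation from $\|P^{n}\varphi\|_{1}=\mathcal O(n^{1-a})$, which would instead require $a>3$.
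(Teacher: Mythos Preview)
Your proposal is correct and follows essentially the same route as the paper: the paper does not give an independent proof here but defers to \cite[Corollary~B]{our 1st article}, which in turn follows the standard Young tower scheme (summable autocorrelations from the polynomial decay estimate, Gordin--Liverani martingale approximation on the tower, and the usual $\sigma=0$ $\Leftrightarrow$ coboundary dichotomy), exactly as you outline. Your closing caveat about obtaining $L^{2}$-convergence of $\sum_{n\ge0}P^{n}\varphi$ at the borderline $a>2$ is the right technical point, and it is resolved precisely by working on the tower where the induced map enjoys a spectral gap, rather than by naive interpolation from the $L^{1}$ decay.
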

 The proof of Corollary~\ref{V.CLT Main Cror} is essentially the same as that given in \cite[Corollary~B]{our 1st article}, and we omit it here. 

\medskip

Given  $ \epsilon>0 $ we define the \textit{large deviation at time $ n $} of the time average of an observable $ \varphi\colon M\to\mathbb R $ from the spatial average as
\begin{equation*}
	LD_{\mu}(\varphi, \epsilon, n):	=\mu\left(\left|\frac{1}{n}\sum_{i=0}^{n-1} \varphi\circ f^i - \int \varphi d\mu\right|> \epsilon \right).
\end{equation*} 
By combining Theorem~\ref{V.teo:main} and \cite{statistical properties implies full return GM, LD with improved polynomial rate}, we have an immediate corollary.  
\begin{maincorollary}[Large Deviations]\label{V.corol:LD}
	Consider a measure space $(M,\mathcal B,\m)$, endowed with some metric, and a measurable map $f: M\rightarrow M$ satisfying $f_{*}m \ll m$. Let $f^{R}: \Delta_{0} \rightarrow \Delta_{0}$ be an aperiodic induced WGM expanding map with a coprime block such that $R \in L^{1}(\m)$, and let $\mu$ be the unique ergodic $f$-invariant probability measure  $\mu$ such that $\mu \ll m$ and $\mu(\Delta_{0})>0$. Then,
	\begin{enumerate}
		\item If $m\{ R >n \} \leq C n^{-a}$ for some $C>0$ and $a >1$, given $  \kappa>0 $ there is $ 0<\zeta<1 $, such that for all $ \epsilon>0 $ and  $\varphi \in (R 4, \tau)$, with $ \frac{2}{\zeta}<\tau\neq\frac{a+1}{\zeta}$ and $ \mathcal{R}_{\infty}(\varphi)< \kappa$, there is some $C^{'}=C^{'}(\epsilon, \varphi) > 0$ such that
		$$
		LD_{\mu}(\varphi, \epsilon, n) \leq C^{'}\max\{n^{-a +1}, n^{2-\zeta\tau}\}.
		$$
		\item If $m\{ R >n \} \leq C e^{-cn}$ for some $C,c>0$, then 
		\begin{itemize}
	\item[i)] for all $ \epsilon>0 $ and $\varphi \in (R 1, \tau)$, there is some $C{'}=C^{'}(\epsilon, \varphi)> 0$ and $c'=c'(c, \varphi, \epsilon, \tau) $ such that
		$$
		LD_{\mu}(\varphi, \epsilon, n) \leq C^{'} e^{-c{'}n^{\frac{1}{3}}};
		$$
	\item[ii)] for all $ \epsilon>0 $ and $\varphi \in (R 2, \tau)$, there is some $C{'}=C^{'}(\epsilon, \varphi)> 0$ and $c'=c'( \varphi, \epsilon)$ such that, for every $\tau^{\prime}<\tau$,
	$$
	LD_{\mu}(\varphi, \epsilon, n) \leq C^{'} e^{-c{'}n^{\frac{\tau^{\prime}}{\tau^{\prime}+2}}};
	$$
\item[iii)] for given $\kappa>0$ there is $ 0<\zeta<1 $ such that for all $ \epsilon>0 $ and $\varphi \in (R4, \tau)$ with $ \tau>\frac{1}{\zeta}$ and $ \mathcal{R}_{\infty}(\varphi)< \kappa$, there is some $C{'}=C^{'}(\epsilon, \varphi)> 0$, such that
$$
LD_{\mu}(\varphi, \epsilon, n) \leq C^{'} n^{1-\zeta\tau}.
$$
\end{itemize}
\end{enumerate}
\end{maincorollary}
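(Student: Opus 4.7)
The plan is to combine the decay of correlations bounds furnished by Theorem~\ref{V.teo:main} with the general conversion results of \cite{statistical properties implies full return GM, LD with improved polynomial rate}, which transform quantitative decay of correlations into large deviation estimates. After centering, i.e.\ replacing $\varphi$ by $\varphi - \int\varphi\,d\mu$ (which does not affect the regularity class $(Ri,\tau)$), the LD quantity $LD_\mu(\varphi,\epsilon,n)$ can be controlled by a Chebyshev-type bound on moments of the Birkhoff sum $S_n\varphi$, which in turn are controlled by sums of $\text{Cor}_\mu(\varphi,\varphi\circ f^k)$; this is the mechanism underlying both cited conversion results, so the task reduces to feeding in the rates of Theorem~\ref{V.teo:main} and reading off the exponents.

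For the polynomial tail case (1), I would invoke Theorem~\ref{V.teo:main}(1)(ii), which yields $\text{Cor}_\mu(\varphi,\psi\circ f^n)\le C'\max\{n^{1-a},n^{2-\zeta\tau}\}$ for every $\psi\in L^\infty(M,m)$. Inserting this rate into the conversion result of \cite{LD with improved polynomial rate}, which transforms polynomial decay of correlations of rate $n^{-p}$ into polynomial LD of essentially the same rate under the assumed tail condition, produces the stated bound $LD_\mu(\varphi,\epsilon,n)\le C'(\epsilon,\varphi)\max\{n^{-a+1},n^{2-\zeta\tau}\}$. The hypothesis $\tau\neq (a+1)/\zeta$ is exactly what avoids the logarithmic correction that would otherwise appear at the matching exponent.

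For the exponential tail subcases (2)(i)--(iii), I would feed the corresponding decay rates of Theorem~\ref{V.teo:main}(2) into the conversion machinery of \cite{statistical properties implies full return GM}, which translates a decay rate $\rho(n)$ into an LD rate as follows: exponential $\rho(n)\le C'e^{-c'n}$ yields $e^{-c' n^{1/3}}$, giving (i); stretched exponential $\rho(n)\le C'e^{-n^{\tau'}}$ yields $e^{-c' n^{\tau'/(\tau'+2)}}$, giving (ii); and the polynomial rate $\rho(n)\le C'n^{1-\zeta\tau}$ from Theorem~\ref{V.teo:main}(2)(iv) yields the polynomial LD bound $n^{1-\zeta\tau}$ in (iii). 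The exponents $1/3$ and $\tau'/(\tau'+2)$ are the standard features of moment-based conversions and reflect the intrinsic loss in passing from correlation decay to deviation bounds via Chebyshev.

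The main obstacle is essentially bookkeeping rather than new dynamical input: one must verify that the hypotheses of the conversion theorems are met, in particular integrability/summability of $\rho(n)$, and the regularity restrictions $\tau>2/\zeta$ in case (1) and $\tau>1/\zeta$ in case (2)(iii) are precisely what make $n^{2-\zeta\tau}$ respectively $n^{1-\zeta\tau}$ decay fast enough to apply the machinery. Finally one must track carefully the dependence of the constants $C'$ and $c'$ on $\epsilon$, $\varphi$, $\tau$ (and $c$ in the exponential tail case) to reach the stated forms.
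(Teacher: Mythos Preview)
Your proposal is correct and follows exactly the approach indicated in the paper, which simply states that the corollary is an immediate consequence of combining Theorem~\ref{V.teo:main} with the conversion results of \cite{statistical properties implies full return GM, LD with improved polynomial rate}. If anything, your write-up is more detailed than the paper's treatment, which offers no proof beyond that one-line remark.
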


\begin{example}
	Let $M=S^1\times[0,1]$ and let $\m$ denote the Lebesgue measure on $M$. We consider the map $f:M\rightarrow M$ introduced in~\cite{Gouezel skew product} defined by
	$$
	f(\theta, x)=\left(F(\theta), f_{\alpha(\theta)}(x)\right),
	$$
	where $F(\theta)=4\theta$,
	$$
	f_\alpha(\theta)(x)= \begin{cases}x\left(1+2^{\alpha(\theta)} x^{\alpha(\theta)}\right) & \text { if } 0 \leqslant x \leqslant \frac{1}{2} \\ 2 x-1 & \text { if } \frac{1}{2}<x \leqslant 1,\end{cases}
	$$
	and  $\alpha: S^1 \rightarrow(0, 1)$ is a $C^1$ map that has minimum $\alpha_{\min }$ and maximum $\alpha_{\max }$, with $\alpha_{\min }<\alpha_{\max }$. In \cite{Gouezel skew product} a partition $\mathcal P_0$ of $\Delta_0=S^1\times(\frac12,1]$ with a certain return time $R$ was given in such a way that $f^{R}$ is an aperiodic induced WGM map (no full branch property) with a coprime block. Moreover, we have
	$	m\{R>n\}\leq {C}{n^{-a}}$
	for some $C>0$, with $a=1/\alpha_{\max}$. Since $\alpha_{\max} <1 $ we have   $ R\in L^{1}(\m).$
	By Theorem~\ref{V.teo:main} there exists a unique ergodic $f$-invariant probability measure  $\mu$ such that $\mu \ll m$.
	Moreover, for given $  \kappa>0$, there is $ 0<\zeta<1 $ such that for all $\varphi \in (R 4, \tau)$ with $ \tau>\frac{2}{\zeta}$ and $ \mathcal{R}_{\infty}(\varphi)< \kappa$, and for all $\psi \in L^\infty(M,\m)$ we have
	\begin{itemize}
		\item if $\tau=\frac{a+1}{\zeta} $, there is some $C{'} > 0$ such that
		$$\Cor_{\mu}(\varphi, \psi \circ f^{n}) \leq C^{'} \left(n^{-a+1} \log n\right);$$
		\item otherwise, there is some $C{'} > 0$ such that
		$$ \Cor_{\mu}(\varphi, \psi \circ f^{n}) \leq C^{'}\max \left\{n^{-a+1}, n^{2-\zeta \tau}\right\}.$$
	\end{itemize}
	By Corollary~\ref{V.CLT Main Cror}, if $\alpha_{max}<1/2$ (thus $a>2$) then CLT is satisfied for all $\varphi \in (R 4, \tau)$, that is not coboundary with some $ \tau>0$ sufficiently large, and by Corollary\ref{V.corol:LD} we also have
	$$		LD_{\mu}(\varphi, \epsilon, n) \leq C^{''}\max \left\{n^{-a+1}, n^{2-\zeta \tau}\right\}
	$$
	for all $\varphi \in (R 4, \tau)$, with $ \tau\neq\frac{a+1}{\zeta}$ and some $C^{''}=C^{''}(\epsilon, \varphi) > 0$.
\end{example}

\section{Decay of Correlations for Tower Maps for larger classes of Observables}\label{V.se:dc on tower}
In this section, we first recall the tower map, and then we state the result of a mixing invariant probability measure for the tower map, coming from \cite{our 1st article}. Furthermore, we design the problem of decay of correlations for this new dynamical system, and devote the rest of this section to address this problem. 
\subsection{Tower Maps}
Assume that $f^{R}: \Delta_{0}\rightarrow \Delta_{0}$ is an induced WGM map for $f: M\rightarrow M $.
We define a \emph{tower}
$$ \Delta=\{(x,\ell): x\in\Delta_{0} \mbox{ and } R(x)> \ell \geq 0 \}   $$

and the tower map  $T: \Delta\rightarrow \Delta$ given by
$$ T(x,\ell)=\left\{
\begin{array}{ll}
	(x,\ell+1), & \mbox{ if } R(x)> \ell+1 \\
	(f^{R}(x),0), & \mbox{ if } R(x)=\ell+1
\end{array}
\right. $$
Note that we can naturally identify the set $\{(x,0)\colon x\in \Delta_0\}\subset\Delta$  with $\Delta_{0}$, and the induced map $T^{R}:\Delta_{0}\rightarrow \Delta_{0}$ with $f^{R}$.
For each $\ell\geq0$, we define the \textit{$\ell$th level} of the tower 
$$\Delta_{\ell}=\{(x,\ell): x\in \Delta_0 \},$$
which is naturally identified with  $\{R>\ell\}\subset\Delta_{0}$. In view of this, we may extend the $\sigma$-algebra $\mathcal A$ and the reference measure $ \m $ on $ \Delta_{0} $ to a $\sigma$-algebra and a measure on $ \Delta $, that we still denote by $\mathcal A$ and $ \m $, respectively. Moreover, the countable partition $ \mathcal{P}_{0} $ of $ \Delta_{0} $ naturally induces an ($\m$ mod 0) partition of each level, that is, if $\mathcal{P}_{0}=\{\Delta_{0,i}\}_{i\in\mathbb N}$ is the partition of $ \Delta_{0} $, then $\{\Delta_{\ell,i}\}_{i \in \mathbb{N}}$, where $\Delta_{\ell,i}=\{(x,\ell)\in \Delta_{\ell}: (x,0)\in\Delta_{0,i}\} $ forms a partition of $\Delta_{\ell}.$
So, the set $ \eta=\{\Delta_{\ell,i}\}_{\ell,i} $  is an ($\m$ mod 0) partition of $ \Delta.$  
For each $ n\geq 1 $, we introduce
$$ 
\eta_{n}=\bigvee_{i=0}^{n-1}T ^{-i}\eta.
$$ 
We can extend the separation time to $ \Delta\times\Delta $ in the following way:
if $ x, y \in \Delta_{\ell} $, then there are unique  $ x_{0}, y_{0} \in \Delta_{0} $ such that $x=T^{\ell}(x_{0}) $ and $ y=T^{\ell}(y_{0}) $, and in this case we set $ s(x, y)=s(x_{0}, y_{0}) $, otherwise set $ s(x, y)=0 $.
It is straightforward to check that $ J_{T} $ is
$$ 
J_{T}(x,\ell)=\left\{
\begin{array}{ll}
	1, & \mbox{ if } R(x)> \ell+1 \\
	J_{f^{R}(x)}, & \mbox{ if } R(x)=\ell+1.
\end{array}
\right.  
$$
Given $0< \beta <1$ we define the following spaces of densities for the tower: 
\begin{equation*}
	\mathcal{F}_{\beta}( \Delta ) = \{\varphi : \Delta\rightarrow \mathbb{R} \colon \exists C_{\varphi} > 0 :|\varphi(x)-\varphi(y)|\leq C_{\varphi}\beta^{s(x,y)}, \forall x,y \in \Delta\}
\end{equation*}
and
\begin{multline*}
	\mathcal{F}_{\beta}^{+}( \Delta ) = \{\varphi\in \mathcal{F}_{\beta}( \Delta ) \colon \exists \hat{C_{\varphi}} > 0 \, \text{ s.t. }\,\\ \varphi(x)>0 \text{ and } \left|\frac{\varphi(x)}{\varphi(y)}-1\right|\leq \hat{C_{\varphi}}\beta^{s(x,y)}, \forall x,y \in\omega \in \eta \}.
\end{multline*}
\begin{theorem}\cite{our 1st article}
	Let  $T: \Delta\rightarrow \Delta$ be the tower map of an aperiodic induced WGM map $ f^R $ with a coprime block and $R\in L^{1}(m)$. Then $ T $ has a unique an exact invariant probability measure  $\nu\ll m$ with $ \frac{d\nu}{dm}\in \mathcal{F}_{\beta}^{+}( \Delta )$, and there is $ C_{0}>0 $  such that $0<\frac{d\nu}{dm}\leq C_{0}.$ Moreover, there exists $C > 0$ such that for all $ x,y \in\omega \in \eta$
	$$\left|\log\frac{\frac{d\nu}{dm}(x)}{\frac{d\nu}{dm}(y)}\right|\leq C\beta^{s(x,y)}.$$
\end{theorem}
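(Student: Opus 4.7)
The plan is to construct $\nu$ by first producing an $F$-invariant density on the base $\Delta_0$ and then spreading it canonically along the tower, in the spirit of Young's construction. First I would analyse the transfer operator $P_F$ of the induced WGM map $F=f^R$ acting on $L^1(\Delta_0,m)$. The Gibbs property W4) combined with the long-branches condition W5) yields uniform bounded distortion for all iterates of $F$, from which a Ces\`aro-mean argument applied to $n^{-1}\sum_{k=0}^{n-1}P_F^k\mathbbm{1}$ extracts a density $h_0\in L^1(\Delta_0,m)$ with $P_Fh_0=h_0$, $h_0>0$ $m$-a.e., and $h_0\leq C_0$ for some constant $C_0$. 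The same distortion estimate delivers the log-H\"older control
$$
\left|\log\frac{h_0(x)}{h_0(y)}\right|\leq C\beta^{s(x,y)}\qquad\text{for all }x,y\in\omega\in\mathcal P_0.
$$
Aperiodicity together with the coprime block then upgrade ergodicity of $(F,h_0 m)$ to exactness and yield uniqueness on the base.

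Next I would set $h(x,\ell)=h_0(x)$ for $(x,\ell)\in\Delta$ and define $\nu$ by $d\nu=h\,dm$. Using the explicit formula for $J_T$ together with the identity $P_Fh_0=h_0$, a direct calculation shows $P_Th=h$, so $\nu$ is $T$-invariant. Finiteness follows from
$$
\int_\Delta h\,dm=\sum_{\ell\geq 0}\int_{\{R>\ell\}}h_0\,dm=\int_{\Delta_0}h_0 R\,dm\leq C_0\|R\|_{L^1(m)}<\infty,
$$
so after normalisation $\nu$ is a probability measure, with $0<h\leq C_0$. The log-H\"older estimate for $h_0$ transfers verbatim to $h$ via the extension of the separation time to $\Delta\times\Delta$, because points in the same column inherit the separation time of their base points, giving $h\in\mathcal F_\beta^+(\Delta)$ together with the claimed quantitative bound on every element of $\eta$.

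Uniqueness and exactness of $\nu$ then pass from the base: any $T$-invariant probability $\nu'\ll m$ on $\Delta$ produces, via the first-return construction to $\Delta_0$, an $F$-invariant probability absolutely continuous with respect to $m|_{\Delta_0}$, which by the uniqueness on the base equals $h_0 m$; since $\nu$ is in turn the unique lift of total mass one, $\nu'=\nu$. Exactness of $(T,\nu)$ combines exactness of $(F,h_0 m)$ with the coprime block hypothesis: aperiodicity plus coprimality of the return times on the coprime block prevent any periodic decomposition of the tower levels, forcing the tail $\sigma$-algebra $\bigcap_{n\geq 0}T^{-n}\mathcal A$ to be trivial.

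The main obstacle is the very first step, the construction of $h_0$ in the absence of the full-branch property. For classical Gibbs--Markov maps, full branches allow one to immediately close a Lasota--Yorke type inequality on a cone of log-H\"older densities, because every image $F(\omega)$ equals $\Delta_0$. In the weak setting, $F(\omega)$ is only a union of elements of $\mathcal P_0$ and may a priori have small measure, so the Gibbs estimate alone is not enough: one has to exploit the long-branches bound $m(F(\omega))\geq\delta_0$ to control the loss of mass and restore uniform lower estimates for $P_F^n\mathbbm{1}$ on partition elements. Once $h_0$ has been produced with the claimed regularity, everything on the tower is routine bookkeeping.
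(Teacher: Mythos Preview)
The paper does not contain a proof of this theorem; the statement is quoted verbatim from the authors' earlier work \cite{our 1st article} and is used here as a black box, so there is no argument in the present paper to compare your proposal against.

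That said, your outline is the standard Young-tower construction adapted to the weak (non-full-branch) Gibbs--Markov setting and is essentially correct as a strategy. Building an $F$-invariant density $h_0$ on $\Delta_0$ via Ces\`aro averages of $P_F^k\mathbbm{1}$, transferring the Gibbs distortion into a log-H\"older bound for $h_0$, and then lifting by $h(x,\ell)=h_0(x)$ is exactly how one expects such a result to be proved, and your bookkeeping for $T$-invariance, finiteness via $\int h\,dm\leq C_0\|R\|_{L^1(m)}$, and the propagation of regularity along columns is sound. You also correctly isolate the genuine new difficulty relative to the full-branch case: without $F(\omega)=\Delta_0$, the usual cone argument does not close immediately, and it is the long-branches hypothesis $m(F(\omega))\geq\delta_0$ that restores the uniform estimates needed to produce $h_0$ with the stated bounds.

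One small point worth tightening if you write this out in full: to place $h$ in $\mathcal F_\beta(\Delta)$ (not just $\mathcal F_\beta^+$ on elements of $\eta$) you need the global difference bound $|h(x)-h(y)|\leq C_\varphi\beta^{s(x,y)}$ for arbitrary $x,y\in\Delta$. For points in distinct elements of $\eta$ (or on distinct levels) the extended separation time is $0$, so this reduces to a uniform upper bound on $h$, which you have; for points in the same element of $\eta$ the log-H\"older estimate combined with $h\leq C_0$ gives the difference bound. It is routine, but it is worth saying explicitly rather than leaving it to ``transfers verbatim''.
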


Let us define some classes of observables on the tower.
Given a bounded function $ \varphi :\Delta\rightarrow \mathbb{R} $, we define the variation for each $ n\geq 0 $ as :
\begin{equation*}
	v_{n}(\varphi)=\sup\{|\varphi(x)-\varphi(y)|: s(x,y)\geq n\}.
\end{equation*}
Consider the following regularity classes:
 \begin{itemize}
\item  $(V 1, \tau)=\{ \varphi : v_n(\varphi)=\mathcal{O}\left(\tau^n\right)\}$, $ \tau \in(0,1) $.
 
 \item  $ (V 2, \tau)=\{\varphi : v_n(\varphi)=\mathcal{O}\left(\exp \left\{-n^\tau\right\}\right)\}$, $ \tau \in(0,1) $.
 
 \item  $(V 3, \tau)=\{\varphi : v_n(\varphi)=\mathcal{O}\left(\exp \left\{-(\log n)^\tau\right\}\right)\}$, $ \tau>1 $.
 
\item  $ (V 4, \tau)=\{\varphi : v_n(\varphi)=\mathcal{O}\left(n^{-\tau}\right)\}$, $ \tau>1 $.
 
 \end{itemize}
 Now we define a measurable semi-conjugacy  $\pi : \Delta \rightarrow M$ between the tower map $T$ and the map $f$, by $\pi(x,\ell)=f^{\ell}(x)$. We have  $\pi\circ T = f\circ \pi$ and $ \pi_{*} \nu$ coincides with the $f$-invariant measure $\mu$ given by Theorem \ref{V.teo:main}. It is an immediate consequence that for  all $\varphi, \psi : M\rightarrow \mathbb{R}$,
\begin{equation}\label{conjugacy}
	\Cor_{\mu}(\varphi, \psi \circ f^{n}) = \Cor_{\nu}(\varphi \circ \pi, \psi \circ \pi \circ T^{n}).
\end{equation}
Given a regularity for $ \varphi :M\rightarrow \mathbb{R} $  in terms of $ 	R_{\epsilon}(\varphi) $, we need the regularity of $ \varphi\circ\pi:\Delta\rightarrow \mathbb{R} $.
Particularly, we want that the observable classes $(R1-R4)$ on $ M $ correspond to the classes $(V1-V4)$ on $\Delta$. For that we have the following lemma.

\begin{lemma}\cite{Lynch paper}\label{V.lemma: phi circ pi}
	Let $T: \Delta \rightarrow \Delta$ be the tower map of an induced expanding WGM map $f^{R} :\Delta_{0} \rightarrow \Delta_{0}$, then
		\begin{itemize}
		\item $\varphi \in(R i, \tau)$ $ \Rightarrow $ $\varphi \circ \pi \in(V i, \tau^{\prime} )$, with $\tau^{\prime}<\tau$, for $i=1,2,3$;
%
		\item $\varphi \in(R 4, \tau)$ $ \Rightarrow $ $\varphi \circ \pi \in(V 4, \tau)$.
	\end{itemize}
\end{lemma}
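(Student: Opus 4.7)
The plan is to bound the variation $v_n(\varphi\circ\pi)$ on the tower by the modulus of continuity $\mathcal{R}_\epsilon(\varphi)$ on $M$, through a single geometric comparison that uses only the expanding conditions on $f^R$. First I would establish that for every $x,y \in \Delta$ with $s(x,y) = n \geq 1$,
\[
d(\pi(x),\pi(y)) \leq C^2 \beta^n,
\]
with $C,\beta$ coming from the expanding conditions (i)--(ii). Indeed, by the definition of the separation time on $\Delta$, both points must lie in the same level $\Delta_\ell$, so one can write $x = T^\ell(x_0)$, $y = T^\ell(y_0)$ with $x_0, y_0 \in \Delta_0$ and $s(x_0, y_0) = n \geq 1$. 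Then $x_0,y_0$ belong to a common $\omega \in \mathcal{P}_0$, on which $R$ is constant (say equal to $R$), so $\ell < R$. Applying (ii) with $j = \ell$ and then (i) yields
\[
d(f^\ell(x_0), f^\ell(y_0)) \leq C\, d(f^R(x_0), f^R(y_0)) \leq C^2 \beta^{s(x_0,y_0)} = C^2\beta^n,
\]
and the identities $\pi(x) = f^\ell(x_0)$, $\pi(y) = f^\ell(y_0)$ give the claim.

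With this metric bound in hand, each of the four cases reduces to substituting $\epsilon = C^2\beta^n$ into $\mathcal{R}_\epsilon(\varphi)$. For $(R1,\tau)$, $\mathcal{R}_\epsilon(\varphi)=\mathcal{O}(\epsilon^\tau)$ yields $v_n(\varphi\circ\pi)=\mathcal{O}((\beta^\tau)^n)$, i.e.\ $(V1,\tau')$ with base $\tau' = \beta^\tau \in (0,1)$. For $(R2,\tau)$, using $|\log(C^2\beta^n)| = n|\log\beta|+\mathcal{O}(1)$, one obtains $v_n(\varphi\circ\pi) = \mathcal{O}(\exp(-|\log\beta|^\tau n^\tau(1+o(1))))$, and then absorbs the multiplicative constant into the exponent at the cost of a strict loss to get $(V2,\tau')$ for any $\tau' < \tau$. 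For $(R3,\tau)$, $\log|\log(C^2\beta^n)| = \log n + \mathcal{O}(1)$ gives $v_n(\varphi\circ\pi)=\mathcal{O}(\exp(-(\log n)^\tau(1+o(1))))$; since $\tau>1$ makes $(\log n)^{\tau'} = o((\log n)^\tau)$, the same shaving trick yields $(V3,\tau')$ for any $\tau'<\tau$. For $(R4,\tau)$, $|\log(C^2\beta^n)|^{-\tau} = \mathcal{O}(n^{-\tau})$ directly with no multiplicative constant to dispose of, so $\varphi\circ\pi\in(V4,\tau)$ with the same exponent.

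The case $s(x,y)=0$ only affects $v_0$, which is trivially bounded by $2\|\varphi\|_\infty$ and absorbed into the $\mathcal{O}$-constants of every class. The only mildly delicate point --- and the reason the conclusion preserves $\tau$ for $i=4$ but loses a small amount for $i=2,3$ --- is the disposal of a multiplicative constant of the form $|\log\beta|^\tau$: for $(R4)$ it sits harmlessly outside the rate, whereas for $(R2)$ and $(R3)$ it appears inside the exponent and can only be absorbed by shrinking $\tau$ slightly. No serious obstacle is expected.
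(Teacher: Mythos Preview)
Your argument is correct and is exactly the approach taken in the Lynch paper that the present article cites (the paper itself does not reproduce a proof). The single geometric estimate $d(\pi(x),\pi(y))\leq C^2\beta^{s(x,y)}$ for $s(x,y)\geq 1$, obtained by chaining the two expanding conditions, is precisely the mechanism used there, and your case-by-case substitution of $\epsilon=C^2\beta^n$ into $\mathcal R_\epsilon(\varphi)$ matches Lynch's computation. One cosmetic remark: in the $(R1)\Rightarrow(V1)$ case the parameters $\tau$ (H\"older exponent) and $\tau'$ (exponential base) are not of the same nature, so the inequality ``$\tau'<\tau$'' in the lemma statement is a notational artifact rather than a genuine loss; your conclusion $\tau'=\beta^\tau\in(0,1)$ is the correct content.
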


Consider the following spaces which are important for the coupling argument:
\begin{equation*}
	\mathcal{I}( \Delta ) = \{\varphi : \Delta\rightarrow \mathbb{R} : v_{n}(\varphi)\rightarrow 0 \},
\end{equation*}
and 
\begin{multline*}
	\mathcal{I}^{+}( \Delta ) = \Bigl\{\varphi\in \mathcal{I}( \Delta ) : \exists C_{\varphi}^{\prime} > 0 \text{ s.t. } \varphi(x)>0 \text{ and }\\ \left|\frac{\varphi(x)}{\varphi(y)}-1\right|\leq C_{\varphi}^{\prime}v_{s(x,y)}(\varphi)+C^{\prime\prime}\beta^{s(x,y)}, \forall x,y \in\omega \in \eta \Bigr\},
\end{multline*}
where $C^{\prime\prime}>0$ is a fixed constant to be specified in Lemma \ref{V.lemma:phi*}.
Given $\varphi\in L^\infty(\Delta)$ we define
$$
\varphi^{\ast}=\frac{1}{\int(\varphi+2||\varphi||_{\infty}+1)d\nu}(\varphi+2||\varphi||_{\infty}+1).
$$
\begin{lemma}\cite{Alves Book}\label{V.D.Correlation concection of T and convergence of equlibrium}
	For all $\varphi\in L^{\infty}(m)$ with $ \varphi\neq 0 $ we have
	\begin{itemize}
		\item[i)] $\frac{1}{3}\leq \varphi^{\ast}\leq 3$;
		\item[ii)]  $\Cor_{\nu}(\varphi,\psi \circ T^{n})\leq3(||\varphi||_{\infty}+1)||\psi||_{\infty}|T_{\ast}^{n}\lambda-\nu|$, for all $\psi\in L^{\infty}(m)$, where $\lambda$ is the probability measure on $\Delta$ such that  $\frac{d\lambda}{dm}=\varphi^{\ast}\frac{d\nu}{dm}.$
	\end{itemize}
\end{lemma}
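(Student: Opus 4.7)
\textbf{Part (i)} follows from a direct sandwich bound. Since $-||\varphi||_{\infty}\le\varphi\le||\varphi||_{\infty}$, the integrand $\varphi+2||\varphi||_{\infty}+1$ is trapped between $||\varphi||_{\infty}+1$ and $3||\varphi||_{\infty}+1$, and integrating against the probability measure $\nu$ keeps the normalizing constant $c:=\int(\varphi+2||\varphi||_{\infty}+1)d\nu$ in the same interval. The quotient defining $\varphi^{\ast}$ is therefore pinned between $\tfrac{||\varphi||_{\infty}+1}{3||\varphi||_{\infty}+1}$ and $\tfrac{3||\varphi||_{\infty}+1}{||\varphi||_{\infty}+1}$, hence lies in $[1/3,3]$.

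\textbf{Part (ii)} proceeds by inverting the definition to write $\varphi=c\varphi^{\ast}-(2||\varphi||_{\infty}+1)$, and then using $T$-invariance of $\nu$. The plan has three steps. First, substitute this identity into $\int\varphi\cdot(\psi\circ T^{n})d\nu$; the principal piece becomes $c\int\varphi^{\ast}(\psi\circ T^{n})d\nu=c\int\psi\,d(T^{n}_{*}\lambda)$, since by construction $d\lambda=\varphi^{\ast}d\nu$, while the additive constant contributes $(2||\varphi||_{\infty}+1)\int\psi\circ T^{n}d\nu$, which by invariance of $\nu$ equals $(2||\varphi||_{\infty}+1)\int\psi\,d\nu$. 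Second, compute $\int\varphi\,d\nu=c-(2||\varphi||_{\infty}+1)$ using the normalization $\int\varphi^{\ast}d\nu=1$ built into the definition of $\varphi^{\ast}$; multiplying by $\int\psi\,d\nu$ produces an additive term that exactly matches the previous one.

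Third, subtract to form the correlation: the $(2||\varphi||_{\infty}+1)\int\psi\,d\nu$ terms cancel on the nose, leaving
$$\Cor_{\nu}(\varphi,\psi\circ T^{n})=c\left|\int\psi\,d(T^{n}_{*}\lambda-\nu)\right|\le c\,||\psi||_{\infty}\,|T^{n}_{*}\lambda-\nu|.$$
Closing with the bound $c\le 3||\varphi||_{\infty}+1\le 3(||\varphi||_{\infty}+1)$ (immediate from the estimates used in (i)) gives the claim. There is essentially no obstacle; the only delicate point is ensuring that the additive constants cancel precisely, and this is exactly the purpose of the normalization engineered into $\varphi^{\ast}$ together with the $T$-invariance of $\nu$. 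The payoff is conceptual rather than technical: the estimate reduces the study of correlations to controlling the total variation distance $|T^{n}_{*}\lambda-\nu|$, which is the quantity the subsequent coupling argument is designed to bound.
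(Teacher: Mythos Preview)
Your proof is correct. The paper does not supply its own argument for this lemma---it is quoted directly from \cite{Alves Book}---and what you have written is precisely the standard computation: the sandwich $\|\varphi\|_\infty+1\le\varphi+2\|\varphi\|_\infty+1\le 3\|\varphi\|_\infty+1$ for part~(i), and the substitution $\varphi=c\varphi^\ast-(2\|\varphi\|_\infty+1)$ together with $T$-invariance of $\nu$ and the normalization $\int\varphi^\ast\,d\nu=1$ to collapse the correlation to $c\,|\int\psi\,d(T^n_\ast\lambda-\nu)|$ for part~(ii).
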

From $ii)$ of Lemma \ref{V.D.Correlation concection of T and convergence of equlibrium} to obtain decay of correlation for the tower map it is enough to estimate $ |T_{*}^n\lambda-\nu| $. 
For that we have the following.

\begin{proposition}\label{V.Convergence to equilibrium}
	Let $ T:\Delta\rightarrow \Delta $ be the tower map of an aperiodic induced WGM map $ f^{R}$ with a coprime block and $ R\in L^{1}(m) $. If $ \nu $ is the unique mixing $ T$-invariant probability measure such that $ \frac{d\nu}{dm}\in \mathcal{I}^{+}( \Delta ) $, then
	\begin{enumerate}
		\item if $m\{ R >n \}=\mathcal{O} (n^{-a})$ for some $a>1$, given $ \kappa>0$, there is $0<\zeta<1$ such that, for any probability measure  $ \lambda $ with $ \frac{d\lambda}{dm}\in \mathcal{I}^{+}( \Delta ) $, $ v_{n}(\frac{d\lambda}{dm})=\mathcal{O}( n^{-\tau})$, for some $\tau>\frac{2}{\zeta}$, and $v_0(\frac{d\lambda}{dm})< \kappa$ we have
		\begin{itemize}
			\item[i)] if $\tau=\frac{a+1}{\zeta}$, then $|T_{*}^{n}\lambda-\nu|=\mathcal{O}\left(n^{1-a} \log n\right)$;
			\item[ii)]  otherwise, $|T_{*}^{n}\lambda-\nu|=\mathcal{O}\left(\max \left(n^{1-a}, n^{2-\zeta \tau}\right)\right)$.
		\end{itemize}
		\item if $m\{ R >n \}=\mathcal{O} (e^{-cn})$ for some $c>0$, then 
		\begin{itemize}
			\item[i)]  for any probability measure  $ \lambda $ with $ \frac{d\lambda}{dm}\in \mathcal{I}^{+}( \Delta ) $, and $ v_{n}(\frac{d\lambda}{dm})=\mathcal{O}(\tau^n)$ for some $\tau\in (0,1)$, we have
			  $$|T_{*}^{n}\lambda-\nu|=\mathcal{O}(e^{-c^{\prime}n}) \text{ for some } c^{\prime}>0;$$
			\item[ii)]  for any probability measure  $ \lambda $ with $ \frac{d\lambda}{dm}\in \mathcal{I}^{+}( \Delta ) $, and $ v_{n}(\frac{d\lambda}{dm})=\mathcal{O}(e^{-n^\tau})$ for some $\tau\in (0,1)$, we have
			 $$|T_{*}^{n}\lambda-\nu|=\mathcal{O}(e^{-n^{\tau^{\prime}}}) \text{ for every } \tau^{\prime}<\tau;$$
			\item[iii)]  for any probability measure  $ \lambda $ with $ \frac{d\lambda}{dm}\in \mathcal{I}^{+}( \Delta ) $, and $ v_{n}(\frac{d\lambda}{dm})=\mathcal{O}(e^{-(\log(n))^\tau})$ for some $\tau>1$, we have
		  $$|T_{*}^{n}\lambda-\nu|=\mathcal{O}(e^{-(\log n)^{\tau^{\prime}}}) \text{ for every } \tau^{\prime}<\tau;$$
			\item[iv)]  given $ \kappa>0$, there is $\zeta<1$ such that, for any probability measure  $ \lambda $ with $ \frac{d\lambda}{dm}\in \mathcal{I}^{+}( \Delta ) $, $ v_{n}(\frac{d\lambda}{dm})=\mathcal{O}( n^{-\tau})$, for some $\tau>\frac{1}{\zeta}$, and $v_0(\frac{d\lambda}{dm})< \kappa$ we have
		 $$|T_{*}^{n}\lambda-\nu|=\mathcal{O}(n^{1-\zeta \tau}).$$
		\end{itemize}
	\end{enumerate}
\end{proposition}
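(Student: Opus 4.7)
The plan is to extend the coupling argument of \cite{our 1st article} from the H\"older-type class $\mathcal F_\beta^+(\Delta)$ to the larger regularity class $\mathcal I^+(\Delta)$. As in the H\"older case, I would set up two independent copies of the tower carrying measures $\lambda$ and $\nu$, run $T\times T$ on $\Delta\times\Delta$, and at each step try to match portions of $T_*^k\lambda$ with $\nu$ that simultaneously return to $\Delta_0$ along a common cylinder in $\eta_n$. Coupled mass contributes equally to both marginals, so it disappears from $|T_*^n\lambda-\nu|$, and the total variation is controlled by the uncoupled residual mass at time $n$. Using Lemma \ref{V.lemma:phi*} (which fixes the constant $C''$ in the definition of $\mathcal I^+$), one checks that after each coupling step the renormalized uncoupled densities still lie in $\mathcal I^+(\Delta)$, so the procedure can be iterated.

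The matched simultaneous returns are produced exactly as in \cite{our 1st article}: the aperiodicity together with the coprime block ensures the existence of $N_0$ such that for every $n\geq N_0$ there is a uniformly positive mass of simultaneous returns to $\Delta_0$ whose iterates have already descended to the base in both copies. The new ingredient, specific to the present setting, is the quantitative propagation of the variation $v_n$ through these steps: one has to show that when two points share a long separation time $s(x,y)\geq k$, the ratio of the coupled densities differs from $1$ by at most $C'_\varphi v_k(d\lambda/dm)+C''\beta^k$. The H\"older term $\beta^k$ is absorbed geometrically into the exponential factor already present in the Gibbs property, while the slowly-decaying $v_k(d\lambda/dm)$ is what limits the rate at which the couplings succeed.

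With this in hand, the residual uncoupled mass at time $n$ splits into two contributions: (a) mass that has not yet completed a common return to $\Delta_0$ by time $n$, bounded in terms of $m\{R>k\}$ (hence by the assumed tail of $R$), and (b) mass that has returned but failed the pairwise matching on cylinders of separation time of order $\zeta n$, governed by $v_{\zeta n}(d\lambda/dm)$. For case (1), summing the two contributions yields $\max(n^{1-a},n^{2-\zeta\tau})$, with the borderline logarithmic correction $n^{1-a}\log n$ exactly when $\tau=(a+1)/\zeta$ forces the two rates to coincide. For case (2), the $R$-tail in (a) is negligible and the rate is dictated entirely by the observable class: $(V1)$ gives a geometric estimate, $(V2),(V3)$ give stretched-exponential decay with loss of any $\tau'<\tau$, while $(V4)$ still produces only polynomial decay $n^{1-\zeta\tau}$ because $v_n$ itself decays polynomially.

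The main obstacle I anticipate is the careful choice of the constant $\zeta\in(0,1)$ and the verification that the coupling still produces a uniformly positive mass of matched pairs at every matched return, uniformly over all admissible initial densities under the constraint $v_0(d\lambda/dm)<\kappa$. Concretely, $\zeta$ must be taken small enough that the separation time one can achieve after $n$ iterations exceeds $\zeta n$ on a set of definite measure, yet large enough that the resulting power $n^{2-\zeta\tau}$ is actually decaying for the range of $\tau$ in the statement; both constraints depend on $\kappa$ and on the Gibbs constants of $F$. The auxiliary task of showing that the coupled densities never leave $\mathcal I^+(\Delta)$ with bounded constants, despite the mixing of the $v_n$ and $\beta^n$ terms, is the technical heart of the argument.
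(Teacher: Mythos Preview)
Your general framework is right --- coupling on $\Delta\times\Delta$ with simultaneous returns to $\Delta_0$, as in \cite{our 1st article} --- but you are missing the specific mechanism that makes it work for densities in $\mathcal I^+(\Delta)$ rather than $\mathcal F_\beta^+(\Delta)$. In the H\"older case one couples a \emph{fixed} fraction $\epsilon$ of the mass at every simultaneous return, and the leftover density stays in $\mathcal F_\beta^+$ with uniformly bounded constants because the regularity term $\beta^k$ decays geometrically. For a density whose variation $v_k$ decays only polynomially, coupling a fixed fraction would blow up the regularity constants of the residual density, and your proposal does not explain how to avoid this. The paper (following \cite{Lynch paper}) instead uses a \emph{variable} sequence $\epsilon_i=\bar\delta\epsilon_i'$, chosen via Lemma~\ref{ Chosing a sequence of epsilon(i)} so that the products $v_i(\Phi)\prod_{j\le i}(1+\epsilon_j')$ stay bounded --- this is precisely what keeps the residual density $\tilde\Phi_i$ under control (Lemma~\ref{V.L34}) --- while simultaneously $\prod_{j\le i}(1-\epsilon_j/D_4)\le \tilde C\max\bigl(v_i(\Phi)^{\zeta},\theta^i\bigr)$ with $\zeta=\bar\delta/D_4$. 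Thus $\zeta$ is not a separation-time fraction as you describe: it is the \emph{exponent} in $v_i(\Phi)^{\zeta}$ that bounds the uncoupled mass after $i$ iterations of $\widetilde T=(T\times T)^S$.

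Consequently your split into ``(a) not yet returned'' plus ``(b) returned but failed matching on cylinders of separation depth $\zeta n$'' does not match the actual structure. There is no failure term: every simultaneous return couples a (possibly small) fraction $\epsilon_i$, and the core estimate (Proposition~\ref{V.first Matching Proposition}) is the convolution-type sum
\[
|T_*^n\lambda_1-T_*^n\lambda_2|\le 2P\{S>n\}+D_5\sum_{i=1}^{n}\Bigl(\prod_{j=1}^{i}\bigl(1-\tfrac{\epsilon_j}{D_4}\bigr)\Bigr)(i+1)\,(m\times m)\Bigl\{S>\tfrac{n}{i+1}\Bigr\},
\]
obtained by decomposing over the number $i$ of simultaneous returns completed by time $n$. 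Substituting $\prod_j(1-\epsilon_j/D_4)=\mathcal O(i^{-\zeta\tau})$ in class $(V4,\tau)$ and the tail estimates for $S$ from Lemma~\ref{estimate for simulatainious retun time} then yields the stated rates, including the $\log n$ correction at $\tau=(a+1)/\zeta$; the other classes are handled analogously. Your intuition that ``$v_k$ limits the rate at which the couplings succeed'' is correct in spirit, but the implementation is through the adaptive sequence $\epsilon_i$, not through a separation-time cutoff, and without that device the iteration you sketch cannot be closed.
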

 \begin{remark}\label{main thm.p}
We observe that from \eqref{conjugacy}, to obtain  decay of correlations for $(f, \mu)$ it therefore suffices to obtain a decay of correlations for $ (T, \nu) $. This means that by using Lemma~\ref{V.lemma: phi circ pi}, Lemma~ \ref{V.D.Correlation concection of T and convergence of equlibrium} and Proposition~\ref{V.Convergence to equilibrium}, we can get the proof of Theorem \ref{V.teo:main}. So it remains to prove Proposition~\ref{V.Convergence to equilibrium}.
\end{remark}
 As we want to use Lemma \ref{V.D.Correlation concection of T and convergence of equlibrium} and Proposition \ref{V.Convergence to equilibrium} in order to get proof of Theorem \ref{V.teo:main}, we need to check the necessary regularity of 
 $\frac{d\lambda}{dm}=\varphi^{\ast}\frac{d\nu}{dm}$, for a given $\varphi$ in classes $ (V1-V4)$.
 
 \begin{lemma}\label{V.lemma:phi*}
	If $\varphi \in(V j, \tau)$, and $ \tau \in(0, 1) $  for  $ j=1,2 $ or $ \tau>1 $  for  $ j=3,4 $ then
	$\varphi^{\ast}\frac{d\nu}{dm}\in \mathcal{I}^{+}( \Delta )$.
\end{lemma}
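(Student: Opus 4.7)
The plan is to verify, for the candidate function $\varphi^{\ast}\tfrac{d\nu}{dm}$, the three defining conditions of $\mathcal{I}^{+}(\Delta)$: positivity, the decay $v_n(\varphi^{\ast}\tfrac{d\nu}{dm})\to 0$, and the ratio estimate. For brevity set $h:=\tfrac{d\nu}{dm}$. Positivity is immediate: Lemma~\ref{V.D.Correlation concection of T and convergence of equlibrium} gives $\varphi^{\ast}\geq 1/3$, while the theorem from \cite{our 1st article} guarantees $h>0$. To show $v_n(\varphi^{\ast}h)\to 0$, I would apply the Leibniz-type bound
\[
|\varphi^{\ast}h(x)-\varphi^{\ast}h(y)|\leq \|\varphi^{\ast}\|_{\infty}|h(x)-h(y)|+\|h\|_{\infty}|\varphi^{\ast}(x)-\varphi^{\ast}(y)|,
\]
together with $|h(x)-h(y)|\leq C_h\beta^{s(x,y)}$ (since $h\in\mathcal{F}_{\beta}(\Delta)$) and the affine identity $\varphi^{\ast}(x)-\varphi^{\ast}(y)=c(\varphi(x)-\varphi(y))$, where $c=1/\int(\varphi+2\|\varphi\|_{\infty}+1)\,d\nu\in(0,1]$. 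Since $v_n(\varphi)\to 0$ in each class $(V j,\tau)$ under consideration, the conclusion follows.

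For the ratio estimate, the key step is the multiplicative decomposition $\varphi^{\ast}h(x)/\varphi^{\ast}h(y)=(\varphi^{\ast}(x)/\varphi^{\ast}(y))\cdot(h(x)/h(y))$ on each $\omega\in\eta$. Exponentiating $|\log(h(x)/h(y))|\leq C\beta^{s(x,y)}$ from the theorem yields $|h(x)/h(y)-1|\leq C_{1}\beta^{s(x,y)}$, while $\varphi^{\ast}(y)\geq 1/3$ combined with the affine form of $\varphi^{\ast}$ gives $|\varphi^{\ast}(x)/\varphi^{\ast}(y)-1|\leq 3c\,v_{s(x,y)}(\varphi)$. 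Applying the identity $(1+a)(1+b)-1=a+b+ab$ and bounding the cross term produces a preliminary estimate
\[
\left|\frac{\varphi^{\ast}h(x)}{\varphi^{\ast}h(y)}-1\right|\leq A\,v_{s(x,y)}(\varphi)+B\,\beta^{s(x,y)},
\]
for constants $A,B$ depending only on $C, C_h$, and $\|\varphi\|_{\infty}$.

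The main obstacle is to replace $v_{s(x,y)}(\varphi)$ on the right by $v_{s(x,y)}(\varphi^{\ast}h)$ while keeping the constant $C''$ in front of $\beta^{s(x,y)}$ universal. For this I would use the dual decomposition
\[
\varphi^{\ast}h(x)-\varphi^{\ast}h(y)=c\,h(x)(\varphi(x)-\varphi(y))+c(\varphi(y)+K)(h(x)-h(y)),
\]
with $K=2\|\varphi\|_{\infty}+1$, which yields the reverse comparison $c\,\inf_{\omega}h\cdot v_{s(x,y)}(\varphi|_{\omega})\leq v_{s(x,y)}(\varphi^{\ast}h|_{\omega})+D\beta^{s(x,y)}$. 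Coupled with the bounded-distortion property $\sup_{\omega}h\leq(1+\hat C_h)\inf_{\omega}h$ of $h$ on each partition element, this converts the variation of $\varphi$ into that of $\varphi^{\ast}h$ element-by-element. The universal constant $C''$ is chosen to depend only on the tower parameters $C_h, \hat C_h, \beta$, while $C'_{\varphi^{\ast}h}$ absorbs the $\varphi$-dependent factors, including the possibly small values of $\inf_{\omega}h$; this is legitimate since $C'_{\varphi^{\ast}h}$ is allowed to depend on the function itself. The delicate point is precisely this calibration, because $h$ need not be uniformly bounded below on $\Delta$, and the $\omega$-dependent factor $1/\inf_{\omega}h$ must be handled without compromising the universality of $C''$.
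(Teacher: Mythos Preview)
Your argument up through the ``preliminary estimate'' $|\varphi^{\ast}h(x)/\varphi^{\ast}h(y)-1|\leq A\,v_{s(x,y)}(\varphi)+B\,\beta^{s(x,y)}$ is correct and matches the paper's, the only cosmetic difference being that the paper passes through logarithms (using $|\log u|\asymp|u-1|$ on a bounded ratio range) rather than the identity $(1+a)(1+b)-1=a+b+ab$. Either route yields the same bound with universal~$B$.

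Where you diverge is in what you call ``the main obstacle'': replacing $v_{s(x,y)}(\varphi)$ by $v_{s(x,y)}(\varphi^{\ast}h)$. The paper simply does \emph{not} perform this conversion. It stops at the bound in terms of $v_{s(x,y)}(\varphi)$, sets $C''=K_2'C$, and declares $\varphi^{\ast}\rho\in\mathcal{I}^{+}(\Delta)$. The Remark immediately following the lemma makes this explicit: the log-ratio inequality actually carried forward into the coupling argument is
\[
\Bigl|\log\frac{\varphi^{\ast}(x)\rho(x)}{\varphi^{\ast}(y)\rho(y)}\Bigr|\leq C''_{\varphi^{\ast}\rho}\,v_{s(x,y)}(\varphi)+C\beta^{s(x,y)},
\]
with the variation of the \emph{original} observable $\varphi$ on the right. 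In the later estimates the paper again uses $v_n(\varphi^{\ast}\rho)\leq 3C_{\rho}\beta^{n}+C_0 v_n(\varphi)$ to reduce everything to $v_n(\varphi)$. So the class $\mathcal{I}^{+}(\Delta)$ is effectively being read as ``ratio controlled by \emph{some} variation sequence of the required type plus a fixed $C''\beta^{s}$'', and the conversion you worry about is never needed.

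Your instinct that the reverse comparison is delicate is well founded: since $h$ need not be bounded below on $\Delta$, the factor $1/\inf_{\omega}h$ in your dual decomposition cannot be absorbed into a single $\omega$-independent constant $C'_{\varphi^{\ast}h}$, so that route would not close as stated. But for the purposes of this lemma and its use downstream, you can safely stop at the preliminary estimate, exactly as the paper does.
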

 \begin{proof}
Set $\rho=\frac{d\nu}{dm}$.
 Since $0<\rho \leq C_{0}$ and from Lemma \ref{V.D.Correlation concection of T and convergence of equlibrium} we have
\begin{equation*}
	\begin{aligned}
		|\varphi^{\ast}(x)\rho(x)-\varphi^{\ast}(y)\rho(y)|&\leq|\varphi^{\ast}(x)(\rho(x)-\rho(y))|+|\rho(y)(\varphi^{\ast}(x)-\varphi^{\ast}(y))|\\
		&\leq 3|\rho(x)-\rho(y)|+C_{0}|\varphi(x)-\varphi(y)|.
	\end{aligned}
\end{equation*} 
 Thus $ v_n(\varphi^{\ast}\rho)\leq 3C_{\rho}\beta^{n}+C_{0}v_{n}(\varphi).$ 
This implies that $\varphi^{\ast}\rho\in \mathcal{I}( \Delta )$.
From Lemma \ref{V.D.Correlation concection of T and convergence of equlibrium}, we have
\begin{equation}\label{claim eq}
	\begin{aligned}
		\left|\frac{\varphi^{\ast}(x)}{\varphi^{\ast}(y)}-1\right|&=\frac{1}{\varphi^{\ast}(y)}\left|\varphi^{\ast}(x)-\varphi^{\ast}(y)\right|\\
		&\leq 3|\varphi(x)-\varphi(y)|\\
		&\leq3 v_{s(x,y)}(\varphi).
	\end{aligned}
\end{equation}

Since $\frac{1}{9}\leq \frac{\varphi^{\ast}(x)}{\varphi^{\ast}(y)}\leq 9 $, for all $ x, y\in \Delta $, then there exists $ K_{1}^{\prime}>0 $ such that 
\begin{equation}\label{V.L eq for log-densisity}
	\left|\log\frac{\varphi^{\ast}(x)}{\varphi^{\ast}(y)} \right|\leq K_{1}^{\prime}\left|\frac{\varphi^{\ast}(x)}{\varphi^{\ast}(y)}-1\right|.
\end{equation}
It follows from \eqref{claim eq} and \eqref{V.L eq for log-densisity} that for all $ x,y\in \omega\in \eta $,
\begin{equation}\label{V.L final log density}
	\begin{aligned}
		\left|\log\frac{\varphi^{\ast}(x)\rho(x)}{\varphi^{\ast}(y)\rho(y)} \right|&\leq\left|\log\frac{\varphi^{\ast}(x)}{\varphi^{\ast}(y)} \right|+\left|\log\frac{\rho(x)}{\rho(y)} \right|\\
		&\leq 3K_{1}^{\prime} v_{s(x,y)}(\varphi)+C\beta^{s(x,y)}.
	\end{aligned}
\end{equation}

Since $ 0<\frac{\varphi^{\ast}(x) \rho(x)}{\varphi^{\ast}(y) \rho(y)}\leq 9e^{C} $, for all $ x, y\in \omega\in \eta $, then we also have some uniform constant $ K_{2}^{\prime}>0 $ that for all $ x, y\in \omega\in \eta $,
\begin{equation*}
	\left|\frac{\varphi^{\ast}(x)\rho(x)}{\varphi^{\ast}(y)\rho(y)}-1 \right|\leq K_{2}^{\prime}\left|\log\frac{\varphi^{\ast}(x)\rho(x)}{\varphi^{\ast}(y)\rho(y)} \right|\leq K_{2}^{\prime}(3K_{1}^{\prime} v_{s(x,y)}(\varphi)+C\beta^{s(x,y)}).
\end{equation*}
Hence $\varphi^{\ast}\rho\in \mathcal{I}^{+}( \Delta )$ with $ C^{\prime\prime}=K_{2}^{\prime}C$.
\end{proof}
 We would like to point out a remark about $ \varphi^{\ast}\frac{d\nu}{dm} $, which is important because we will assume this king of regularity in the coupling argument.
\begin{remark}
	From the previous proof we can see that there exists $C_{\varphi^*\rho}''>0$ such that for all  $ x,y\in \omega\in \eta $,
	\begin{equation*}
		\left|\log\frac{\varphi^{\ast}(x)\rho(x)}{\varphi^{\ast}(y)\rho(y)} \right|\leq C_{\varphi^*\rho}''v_{s(x,y)}(\varphi)+C\beta^{s(x,y)}.
	\end{equation*}
\end{remark}
\subsection{Coupling}
 We give the proof of Proposition \ref{V.Convergence to equilibrium} in these remaining sections.

Let $ \lambda_{1} $ and $ \lambda_{2} $ be probability measures on $ \Delta $ with $\varphi_{1} =\frac{d\lambda_{1}}{dm}, \varphi_{2} =\frac{d\lambda_{2}}{dm}\in \mathcal{I}^{+}( \Delta )$. 
Let $P= \lambda_{1}\times\lambda_{2} $ be the product measure on $ \Delta\times\Delta .$ 
We consider the product transformation $ T\times T :\Delta\times\Delta\rightarrow \Delta\times\Delta $, and let $ \pi_{1}, \pi_{2} :\Delta\times\Delta\rightarrow \Delta $ be the projections onto the first and second coordinates, respectively. 
We use $ \eta\times\eta $ to denote the product partition of $ \Delta\times\Delta $ and $ (\eta\times\eta)_{n}=\bigvee_{i=0}^{n-1}(T\times T) ^{-i}(\eta\times\eta). $
Notice that 
\begin{equation}\label{E32}
	T^{n}\circ\pi_{1}=\pi_{1}\circ(T\times T) ^{n} \text{ and } T^{n}\circ\pi_{2} = \pi_{2}\circ(T\times T) ^{n}.
\end{equation}
Let $ \hat{R}: \Delta\rightarrow \mathbb{N} $ be the return time to $ \Delta_{0} $, defined for $ x\in \Delta $ by
$$ \hat{R}(x)=\min\{n\geq0 : T^{n}(x)\in \Delta_{0} \}.$$

We fix some integer $ n_{0} >0 $ such that  for any $ \omega\in \mathcal{P}_{0} $,
\begin{equation*}
	m(T^{-n}(\Delta_{0})\cap f^{R}(\omega))\geq \text{ some } \gamma_{0}>0 \text{ for }  n\geq  n_{0}. 
\end{equation*}
The above choice of $ n_{0} $ is important for Lemma \ref{V.L27}. 

Let us now introduce a sequence of \emph{stopping times} $ 0=\tau_{0}< \tau_{1}< \tau_{2}... $ on $ \Delta\times\Delta $,
\begin{equation*}
	\begin{array}{lcl}
		\tau_{1}&=&n_{0}+\hat{R}\circ T^{ n_{0}}\circ\pi_{1}\\
		\tau_{2}&=&n_{0}+\tau_{1}+\hat{R}\circ T^{ n_{0}+\tau_{1}}\circ\pi_{2}\\ 
		\tau_{3}&=&n_{0}+\tau_{2}+\hat{R}\circ T^{ n_{0}+\tau_{2}}\circ\pi_{1}\\
		\tau_{4}&=&n_{0}+\tau_{3}+\hat{R}\circ T^{ n_{0}+\tau_{3}}\circ\pi_{2}\\
		&\vdots&
	\end{array}
\end{equation*}
We define the simultaneous return to $ \Delta_{0} $ $ S:\Delta\times\Delta\rightarrow \mathbb{N} $ by
\begin{equation}\label{E30}
	S(x, y)=\min_{i\geq 2}\{\tau_{i}(x,y): (T^{\tau_{i}(x, y), }(x),T^{\tau_{i}(x, y), }(y))\in \Delta_{0}\times\Delta_{0}\},
\end{equation}
which is well defined $ m\times m $ almost everywhere.
Let $\xi_{0}<\xi_{1}<\xi_{2}<\xi_{3}...$ be an increasing sequence of partitions on $ \Delta\times\Delta $ defined as follows. As usual, given a partition $ \xi $, we denote $\xi(x)$  the element
of $ \xi $ containing $ x $.
First we take $\xi_{0}= \eta\times\eta$.
Now we describe the general inductive step in the construction of partitions $ \xi_{k} $. Assume that $ \xi_{j} $ has been constructed for all $ j<k $. The definition of $ \xi_{k} $ depends on whether $ k $ is odd or even. For definiteness we assume that $ k $ is odd. The construction for $ k $ even is the same apart from the change in the role of the first and second components.   
We let $ \xi_{k}=\{\xi_{k}(\overline{x}): \overline{x} \in \Delta\times\Delta \}$, where
\begin{equation*}
	\xi_{k}(\overline{x})= \bigvee_{i=0}^{\tau_{k}(\overline{x})-1}(T^{-i}(\eta))(x)\times\pi_{2}(\xi_{k-1}(\overline{x})).
\end{equation*}
The following lemmas, which are similar to \cite[Lemmas 5.10, 5.11]{our 1st article}, are crucial to estimate $P(S>n)$. The only difference here is the dependence of the constants $\epsilon_0$ and $C_2$.  
\begin{lemma}\label{V.L27}
	There exists $ \epsilon_{0}>0$, depending on $ C_{\varphi_1}', C_{\varphi_2}' $, $ v_{0}(\varphi_{1}) $, $ v_{0}(\varphi_{2}) $ such that for all $ k\geq 1 $ and $ \tau \in\xi_{k}  $ with $ S|_\tau>\tau_{k-1}$, we have 
	\begin{equation*}
		P(S=\tau_{k} | \tau)\geq \epsilon_{0}.
	\end{equation*}
	Moreover, the dependence of $\epsilon_{0}$ on $ C_{\varphi_1}', C_{\varphi_2}' $, $ v_{0}(\varphi_{1}) $, $ v_{0}(\varphi_{2}) $ can be removed if we take $ k $ sufficiently large. 
	
\end{lemma}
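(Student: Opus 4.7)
The plan is to adapt the coupling argument of Lemma~5.10 in \cite{our 1st article} to the broader class of densities $\varphi_1,\varphi_2\in\mathcal I^+(\Delta)$, tracking carefully how $\epsilon_0$ depends on the distortion constants $C'_{\varphi_1}, C'_{\varphi_2}$ and the initial variations $v_0(\varphi_1), v_0(\varphi_2)$. I will write the argument for $k$ odd; the case $k$ even is symmetric after interchanging the roles of $\pi_1$ and $\pi_2$.

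First I would unfold the conditional probability. On $\tau$ the stopping time $\tau_k$ is constant, equal to some $N$, and $\tau$ decomposes as a product $A_1\times A_2$ with $A_1$ the cylinder in the first coordinate determined by $\bigvee_{i=0}^{N-1}T^{-i}\eta$ and $A_2=\pi_2(\xi_{k-1}(\bar x))$. Since $P=\lambda_1\times\lambda_2$ is a product measure and $T^{N}\pi_1\in\Delta_0$ automatically on $\tau$ by the definition of $\tau_k$ for $k$ odd, I get
$$
P(S=\tau_k\mid\tau)=\frac{\lambda_2(A_2\cap T^{-N}\Delta_0)}{\lambda_2(A_2)}
=\frac{\int_{A_2\cap T^{-N}\Delta_0}\varphi_2\,dm}{\int_{A_2}\varphi_2\,dm}.
$$

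Next I would replace the ratio of $\varphi_2$-integrals by a ratio of reference measures up to a multiplicative distortion factor. Since $A_2$ is an atom of $\bigvee_{i=0}^{\tau_{k-1}-1}T^{-i}\eta$, any two points $y,y'\in A_2$ have a separation time bounded below by the number of $f^R$-returns occurring within $\tau_{k-1}$ tower steps, so the $\mathcal I^+(\Delta)$-property of $\varphi_2$ gives a multiplicative constant $D_2=D_2(C'_{\varphi_2},v_0(\varphi_2))$ making the ratio of $\varphi_2$-integrals $D_2$-comparable to the ratio of $m$-measures. Then I would push forward under $T^{\tau_{k-1}}$: on $A_2$ this map is a bijection onto $B:=T^{\tau_{k-1}}(A_2)\subset\Delta_0$, and the Gibbs property W4) bounds the Jacobian distortion by $e^{C_F}$, so
$$
\frac{m(A_2\cap T^{-N}\Delta_0)}{m(A_2)}\ge e^{-C_F}\,\frac{m(B\cap T^{-(N-\tau_{k-1})}\Delta_0)}{m(B)}.
$$
Finally I would invoke the Markov and long-branches properties to show that $B$ contains $f^R(\omega_*)$ for some $\omega_*\in\mathcal P_0$: this follows because at time $\tau_{k-1}$ the second coordinate has just completed an $f^R$-return, so $A_2$ fits (after the previous returns) inside a single element of $\mathcal P_0$ that is mapped onto $f^R(\omega_*)$. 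Since $N-\tau_{k-1}\ge n_0$ by construction of $\tau_k$, the key estimate $m(T^{-n}\Delta_0\cap f^R(\omega_*))\ge\gamma_0$ gives $m(B\cap T^{-(N-\tau_{k-1})}\Delta_0)\ge\gamma_0$, while $m(B)\le m(\Delta_0)<\infty$. Combining the estimates produces a uniform lower bound $\epsilon_0=\epsilon_0(C'_{\varphi_1},C'_{\varphi_2},v_0(\varphi_1),v_0(\varphi_2))>0$; the first-coordinate constants enter the same way when $k$ is even. For the \emph{moreover} statement, note that $\tau_{k-1}\ge(k-1)n_0$ grows with $k$, forcing the minimal separation time between points of $A_2$ to grow; since $\varphi_2\in\mathcal I(\Delta)$ means $v_n(\varphi_2)\to 0$ and $\beta^n\to 0$, the distortion factor $D_2$ tends to~$1$ as $k\to\infty$, so for all sufficiently large $k$ it may be absorbed and $\epsilon_0$ reduces to a constant depending only on the tower data $C_F,\gamma_0,m(\Delta_0)$.

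The main obstacle I anticipate is the step asserting that the image $B=T^{\tau_{k-1}}(A_2)$ contains a \emph{complete} set of the form $f^R(\omega_*)$ rather than just a proper subset, given that the Markov property here is only weak (no full-branch assumption). This has to be traced carefully through the inductive construction of $\xi_{k-1}$ and the way the stopping times $\tau_j$ chain together, so that each completed return produces a genuine $f^R$-image of a single $\omega\in\mathcal P_0$; the role of the waiting time $n_0$ before each new return is crucial here, as is the fact that the partition in the definition of $\xi_{k-1}$ is built from $T^{-i}\eta$ up to the moment of the last return, not further.
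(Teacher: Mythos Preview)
Your approach is exactly what the paper intends: the paper does not prove this lemma independently but defers to \cite[Lemma~5.10]{our 1st article}, remarking that the only modification is the dependence of $\epsilon_0$ on the regularity data $C'_{\varphi_1},C'_{\varphi_2},v_0(\varphi_1),v_0(\varphi_2)$ --- precisely the tracking you carry out via the $\mathcal I^+(\Delta)$ bound in place of the $\mathcal F_\beta^+(\Delta)$ bound. Regarding your anticipated obstacle, for $k\ge2$ one in fact gets $B=T^{\tau_{k-1}}(A_2)=f^R(\omega_*)$ (equality, not merely containment), since $A_2$ is an atom of $\bigvee_{i=0}^{\tau_{k-1}-1}T^{-i}\eta$ so that $T^{\tau_{k-1}-1}(A_2)$ is a single top-level $\eta$-atom and the next step is a base return; together with the fact that the separation time on $A_2$ is at least $\lfloor(k-1)/2\rfloor$ (the number of completed $\pi_2$-returns, rather than $\tau_{k-1}/n_0$), this handles both the main estimate and the ``moreover'' clause.
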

\begin{lemma}\label{V.L28}
	There exists $ D_{1}$, depending on $ C_{\varphi_{1}}', C_{\varphi_{2}}' $, $ v_{0}(\varphi_{1}) $, $ v_{0}(\varphi_{2}) $ such that for all $n, k\geq 0 $ and for all $ \tau \in\xi_{k} $, we have 
	\begin{equation*}
		P(\tau_{k+1}- \tau_{k}>n+n_{0} | \tau)\leq D_{1}m\{\hat{R}>n\}.
	\end{equation*}
	Moreover, the dependence of $D_{1}$ on  $ C_{\varphi_{1}}', C_{\varphi_{2}}' $, $ v_{0}(\varphi_{1}) $, $ v_{0}(\varphi_{2}) $ can be removed if we take $ k $ sufficiently large.
	
\end{lemma}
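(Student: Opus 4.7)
The plan is to mirror the proof of \cite[Lemma 5.11]{our 1st article}, with the key adjustment that estimates on the densities $\varphi_i = d\lambda_i/dm$ must now rely on the $\mathcal{I}^+$-inequality
$$\Bigl|\frac{\varphi_i(x)}{\varphi_i(y)} - 1\Bigr| \leq C'_{\varphi_i}\,v_{s(x,y)}(\varphi_i) + C''\beta^{s(x,y)}$$
on elements of $\eta$, combined with the global bound $v_0(\varphi_i) < \infty$, rather than on a pure Hölder-type estimate. The dependence of the final constant $D_1$ on $C'_{\varphi_1}, C'_{\varphi_2}, v_0(\varphi_1), v_0(\varphi_2)$ then drops out of the argument automatically.

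First I would fix $k \geq 0$ and $\tau \in \xi_k$, and consider $k$ odd (the case $k$ even is symmetric after swapping $\pi_1$ and $\pi_2$). Setting $N := \tau_k$ and $M := \tau_{k-1}$, both constant on $\tau$, the definition of the stopping times gives $\tau_{k+1} - \tau_k = n_0 + \hat R \circ T^{n_0 + N} \circ \pi_2$. Because $P = \lambda_1 \times \lambda_2$ is a product, the conditional distribution of $\pi_2$ given $\tau$ is $\lambda_2|_A/\lambda_2(A)$, where $A := \pi_2(\tau)$ lies in $\bigvee_{i=0}^{M-1}T^{-i}(\eta)$ and $T^M|_A$ is a bijection onto a subset of $\Delta_0$. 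Writing $\mathcal{L}$ for the transfer operator of $T$ with respect to $m$, a change of variables reduces the lemma to the $L^\infty$ estimate
$$\Bigl\|\frac{1}{\lambda_2(A)}\,\mathcal{L}^{n_0 + N}(\varphi_2 \mathbf{1}_A)\Bigr\|_\infty \leq D_1,$$
as integrating this bound against $\mathbf{1}_{\{\hat R > n\}}$ then yields the claimed inequality.

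To prove the $L^\infty$ estimate I would factor $\mathcal{L}^{n_0+N} = \mathcal{L}^{n_0 + N - M} \circ \mathcal{L}^M$. On $A$, which is contained in a single element of $\eta$, the $\mathcal{I}^+$-inequality combined with $v_0(\varphi_2) < \infty$ gives that $\varphi_2|_A$ is comparable, with ratio depending only on $C'_{\varphi_2}, v_0(\varphi_2)$ and $C''$, to the average $\lambda_2(A)/m(A)$. Together with bounded distortion of $T^M|_A$ (obtained by summing the Gibbs estimate W\ref{def:WGM:Markov}) over the finitely many $f^R$-compositions comprising $T^M|_A$, exploiting the geometric decay in $s(\cdot,\cdot)$), this produces
$$\mathcal{L}^M(\varphi_2 \mathbf{1}_A) \leq D'\,\frac{\lambda_2(A)}{m(T^M(A))}\,\mathbf{1}_{T^M(A)},$$
and $m(T^M(A)) \geq \delta_0$ by the long-branches condition. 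The remaining $n_0 + N - M$ iterations are absorbed following the strategy of \cite[Lemma 5.11]{our 1st article}, using aperiodicity, the coprime block, and the choice of $n_0$ to produce a uniformly bounded pushed-forward density independent of $N$. To remove the dependence of $D_1$ on the $\varphi_i$-regularity for $k$ sufficiently large, I would then invoke the same mechanism as in \cite{our 1st article}: after enough returns to $\Delta_0$, $\mathcal{L}^M(\varphi_2 \mathbf{1}_A)/\lambda_2(A)$ becomes comparable to the invariant density $d\nu/dm \in \mathcal{F}_\beta^+(\Delta)$, with constants depending only on the intrinsic tower data $C, \beta, C_0$.

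The main obstacle I anticipate is verifying that the bounded-distortion step in the first stage survives the transition from $\mathcal{F}_\beta^+$ to $\mathcal{I}^+$; this is precisely where the finiteness of $v_0(\varphi_2)$ is essential, replacing the role played by the pointwise log-ratio bound enjoyed by Hölder densities, and is the only genuinely new ingredient compared with the original argument in \cite{our 1st article}.
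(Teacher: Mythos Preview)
Your proposal is correct and follows essentially the same route as the paper. The paper does not give an explicit proof of this lemma; it simply states that the argument is identical to \cite[Lemma~5.11]{our 1st article} with the only change being the dependence of the constant on $C_{\varphi_1}', C_{\varphi_2}', v_0(\varphi_1), v_0(\varphi_2)$, and your sketch correctly isolates the one place where this dependence enters, namely the bounded-distortion step where the $\mathcal{I}^+$-inequality combined with $v_0(\varphi_i)<\infty$ replaces the pure $\mathcal{F}_\beta^+$ estimate used in \cite{our 1st article}.
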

Finally we can obtain the estimate for $ P\{S>n\}$. 
Note that 
\begin{equation*}
	m\{\hat{R}>n\}=\sum_{\ell >n}m(\Delta_{\ell})=\sum_{\ell >n}m\{R>n\},
\end{equation*}
which together Lemma~\ref{V.L27}, Lemma~\ref{V.L28}, and \cite[Proposition 3.46, Proposition 3.48]{Alves Book}, provide the following result.
\begin{lemma}\, 
\begin{enumerate}\label{estimate for simulatainious retun time}
	\item If $m\{R>n\}\leq Cn^{-a}$, for some $C>0$ and $a>1$, then $P\{S>n\}\leq~ C^{\prime}n^{-a+1}$ for some $ C^{\prime}>0 $
	\item If $m\{R>n\} \leq C e^{-c n}$, for some $C, c>0$, then 
	$ P\{S>n\}\leq C^{\prime} e^{-c_{0} n}$ for some  $C^{\prime}, c_{0}>0.$
\end{enumerate}
\end{lemma}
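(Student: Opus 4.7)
The plan is to combine Lemmas~\ref{V.L27} and \ref{V.L28} to fit the setup of Propositions 3.46 and 3.48 in \cite{Alves Book}, where general estimates for tails of simultaneous return times in a renewal-type scheme of this form are established. The viewpoint is that the sequence of stopping times $(\tau_k)_{k\geq 1}$ plays the role of renewal epochs at which a coupling attempt is made: Lemma~\ref{V.L27} guarantees a uniform lower bound $\epsilon_0$ on the conditional success probability at each attempt (conditional on not having coupled before), while Lemma~\ref{V.L28} supplies the waiting-time tail $P(\tau_{k+1}-\tau_k > n+n_0 \mid \tau) \leq D_1\, m\{\hat{R}>n\}$.

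First I would convert the tail of $R$ into that of $\hat{R}$ via the identity recalled immediately above the statement, $m\{\hat{R}>n\}=\sum_{\ell>n} m\{R>\ell\}$. In the polynomial case $m\{R>n\}\leq Cn^{-a}$ with $a>1$, summation gives $m\{\hat{R}>n\}=O(n^{1-a})$; in the exponential case $m\{R>n\}\leq Ce^{-cn}$, it gives $m\{\hat{R}>n\}=O(e^{-cn})$. This converts the hypotheses of the lemma into the form required by Lemma~\ref{V.L28}.

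The main step is then a decomposition of $\{S>n\}$ through the coupling index $K:=\min\{k\geq 2 : (T^{\tau_k}\pi_1(x,y), T^{\tau_k}\pi_2(x,y)) \in \Delta_0\times\Delta_0\}$. For a threshold $k_n$ to be chosen, iterated application of Lemma~\ref{V.L27} gives $P(K>k_n) \leq (1-\epsilon_0)^{k_n-1}$, while on the complementary event the inclusion
\[
\{S>n,\, K\leq k_n\} \subseteq \bigcup_{k\leq k_n}\{\tau_k-\tau_{k-1} > n/k_n\}
\]
combined with Lemma~\ref{V.L28} bounds the probability by $k_n D_1\, m\{\hat{R} > n/k_n - n_0\}$. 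In the exponential case, taking $k_n=\lfloor \alpha n\rfloor$ for a sufficiently small $\alpha>0$ makes both pieces decay exponentially in $n$ at a common rate $c_0>0$, producing (2). In the polynomial case, one instead optimizes $k_n$ to match $(1-\epsilon_0)^{k_n}$ against $k_n(n/k_n)^{1-a}$, which gives the bound $P\{S>n\}=O(n^{1-a})$ required in (1).

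The main obstacle will be this last balancing in the polynomial regime, where a naive choice of $k_n$ introduces a spurious logarithmic factor that must be avoided to obtain the exponent $1-a$ exactly. This is precisely what is handled by \cite[Proposition 3.46]{Alves Book}, whose hypotheses are exactly the conclusions of Lemmas~\ref{V.L27} and \ref{V.L28} combined with the tail estimate on $\hat R$ computed above; the analogous \cite[Proposition 3.48]{Alves Book} handles the exponential case. Once the applicability of those propositions is verified, both statements follow immediately.
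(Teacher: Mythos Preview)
Your proposal is correct and follows exactly the paper's approach: convert the tail of $R$ into that of $\hat R$ via $m\{\hat R>n\}=\sum_{\ell>n}m\{R>\ell\}$, then feed Lemmas~\ref{V.L27} and~\ref{V.L28} into \cite[Propositions~3.46 and~3.48]{Alves Book}. The extra heuristic you give for the internal decomposition is consistent with those propositions, and you correctly identify that the sharp polynomial exponent (without a logarithmic loss) is exactly what \cite[Proposition~3.46]{Alves Book} provides.
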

We are now going to estimate  $|T_{*}^{n}\lambda_{1}-T_{*}^{n}\lambda_{2}|$. 
Consider the induced map $ \widetilde{T}=(T\times T)^{S}: \Delta\times\Delta\rightarrow \Delta\times\Delta ,$ with $ S $ as in \eqref{E30}, and the functions $ 0=S_{0}<S_{1}<S_{2}<..., $ defined for each $ n\geq 1 $ as $ S_{n}=S_{n-1}+S\circ(T\times T)^{S_{n-1}}. $
Note that $ \widetilde{T}^{n}=(T\times T)^{S_{n}}. $

Let $ \widetilde{\xi} $ be the partition of $ \Delta\times\Delta $ into the rectangles $ \Omega $ on which $S$ is constant and $ \widetilde{T} $ maps $ \Omega $ bijectively onto a union of elements of $  \eta\times\eta|_{\Delta_0 \times \Delta_0} .$ 
Without loss of generality, we assume that for any $ \Omega\in \widetilde{\xi}|_{\Delta_0 \times \Delta_0}$, there exists $ \omega_{j}\times\omega_{j^{\prime}}\in  \eta\times\eta|_{\Delta_0 \times \Delta_0}$ such that $ \Omega\subset \omega_{j}\times\omega_{j^{\prime}} .$
For each $ n\geq 1, $ define 
$$
\tilde{\xi}_n=\bigvee_{j=0}^{n-1} \widetilde{T}^{-j} (\tilde{\xi}).
$$
Each $\tilde{\xi}_n$ is a partition into sets $\Omega \subset \Delta \times \Delta$ on which $S_n$ is constant and $\widetilde{T}^n$ maps $\Omega$ bijectively onto an $\m\times\m$ mod 0 union of elements of  $ \eta\times\eta|_{\Delta_0 \times \Delta_0}$. Let us introduce a separation time in $\Delta \times \Delta$, defining for each $u, v \in \Delta \times \Delta$
$$\tilde{s}(u, v)=\min \left\{n \geq 0: \widetilde{T}^n(u)\right.\text{ and }\widetilde{T}^n(v) \text{ lie in distinct elements of }\left.\tilde{\xi}\right\}.$$

Let $ \Phi(x,y)=\varphi_{1}(x)\varphi_{2}(y)$ and let $ C_{\varphi_{1}}'' $ and $ C_{\varphi_{2}}''$ be constants such that, for $ i=1, 2 $
\begin{equation*}
	\left|\log \frac{\varphi_{i}(x)}{\varphi_{i}(y)} \right|\leq C_{\varphi_{i}}'' v_{s\left(x,y\right)}(\varphi_{i}) \text{ for all } x,y \in \omega\in \eta.
\end{equation*}
Set, $ C_{\Phi}= C_{\varphi_{1}}''+C_{\varphi_{2}}'' $ and $ v_{n}(\Phi)=\max\{v_{n}(\varphi_{1}), v_{n}(\varphi_{2})\}. $
 If we consider, $ \left|\log \frac{\varphi_{1}(x)}{\varphi_{1}(y)} \right|\leq  C_{\varphi_{1}}'' v_{s\left(x,y\right)}(\varphi_{1})+C_{\varphi_{2}}^{\prime\prime} v_{s\left(x,y\right)}(\varphi_{2})$, then we set, $ C_{\Phi}=  C_{\varphi_{1}}''+2C_{\varphi_{2}}^{\prime\prime}$. Similar, to \cite[Proposition 5.13]{our 1st article} we get the following.
\begin{proposition}\label{V.P38}
	There exists $D_{2}>0$ depending on $C_{\Phi}, v_{0}(\Phi)$ such that for all $n, k \geq 0$, we have
	$$
	P\left\{S_{k+1}-S_k>n\right\} \leq D_{2}(m \times m)\{S>n\}.
	$$
\end{proposition}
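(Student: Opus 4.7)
My plan is to establish that the density $\widetilde L^k\Phi:=d(\widetilde T^k_*P)/d(m\times m)$ is uniformly bounded in $k$ by some $D_2=D_2(C_\Phi,v_0(\Phi))$; once this is in hand the conclusion is immediate, since
\begin{equation*}
P\{S_{k+1}-S_k>n\}=\int\mathbf{1}_{\{S>n\}}\,\widetilde L^k\Phi\,d(m\times m)\leq D_2\,(m\times m)\{S>n\}.
\end{equation*}
This follows the strategy of \cite[Proposition~5.13]{our 1st article}, the main adaptation being the replacement of the H\"older-type class $\mathcal F^+_\beta(\Delta)$ by the larger $\mathcal I^+(\Delta)$, so all constants must be tracked in terms of $C_\Phi$ and $v_0(\Phi)$ only.

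For each $\Omega\in\tilde\xi_k$ the map $\widetilde T^k|_\Omega\colon\Omega\to\widetilde T^k(\Omega)$ is a bijection and $S_k$ is constant on $\Omega$, so by the change-of-variables formula the contribution of $\Omega$ to $\widetilde L^k\Phi(u,v)$ at a point $(u,v)\in\widetilde T^k(\Omega)$ is $(\Phi/J_{\widetilde T^k})\circ(\widetilde T^k|_\Omega)^{-1}(u,v)$. The central estimate I need is bounded distortion: there is $K=K(C_\Phi,v_0(\Phi))$ such that
\begin{equation*}
\Bigl|\log\tfrac{\Phi(x,y)/J_{\widetilde T^k}(x,y)}{\Phi(x',y')/J_{\widetilde T^k}(x',y')}\Bigr|\leq K\qquad\forall\,(x,y),(x',y')\in\Omega.
\end{equation*}
This is obtained by combining (i)~the Gibbs property W4) of $F$, lifted via the tower Jacobian formula and a telescoping along the returns $S_1<\cdots<S_k$, which bounds the $J_{\widetilde T^k}$-distortion by a uniformly summable series of $\beta^{s(\cdot,\cdot)}$ terms; and (ii)~the hypothesis $\varphi_1,\varphi_2\in\mathcal I^+(\Delta)$, precisely the form of regularity verified for $\varphi^\ast\rho$ in Lemma~\ref{V.lemma:phi*}, which yields $|\log(\Phi(x,y)/\Phi(x',y'))|\leq C_\Phi''[v_{s(x,x')}(\varphi_1)+v_{s(y,y')}(\varphi_2)]+C[\beta^{s(x,x')}+\beta^{s(y,y')}]$, uniformly bounded by a quantity depending only on $v_0(\Phi)$.

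Given the distortion, $(\Phi/J_{\widetilde T^k})\circ(\widetilde T^k|_\Omega)^{-1}(u,v)\leq e^K\,P(\Omega)/(m\times m)(\widetilde T^k(\Omega))$. The inductive construction of $\xi_k$ makes each $\Omega\in\tilde\xi_k$ a product set, and applying the long-branch condition W5) to the final return in each coordinate yields the uniform lower bound $(m\times m)(\widetilde T^k(\Omega))\geq\delta_0^2$. Summing over the disjoint atoms $\Omega$ whose $\widetilde T^k$-image contains $(u,v)$, and using $\sum_\Omega P(\Omega)=1$, produces $\widetilde L^k\Phi(u,v)\leq e^K/\delta_0^2=:D_2$ uniformly in $(u,v)$ and $k$. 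The main obstacle is ingredient~(ii) of the distortion: because $\mathcal I^+(\Delta)$ only requires $v_n(\varphi_i)\to 0$ with no prescribed rate, the mixed bound $v_s(\varphi_i)+\beta^s$ must be propagated through the iterations of $\widetilde T$ with care, verifying that the arising constants depend only on $C_\Phi$ and $v_0(\Phi)$ and are independent of $k$. The definition of $\mathcal I^+$ was engineered precisely to make this adaptation of \cite[Proposition~5.13]{our 1st article} go through.
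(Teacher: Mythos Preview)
Your proposal is correct and follows precisely the approach the paper intends: the paper gives no proof here but simply states the result is obtained ``similar to \cite[Proposition~5.13]{our 1st article}'', and your argument is exactly that adaptation---bound the transfer-operator density $\widetilde L^k\Phi$ uniformly via bounded distortion of $\Phi/J_{\widetilde T^k}$ together with the long-branch lower bound $(m\times m)(\widetilde T^k(\Omega))\geq\delta_0^2$, then integrate against $\mathbf{1}_{\{S>n\}}$. You also correctly identify the only point of departure from the H\"older case, namely that the $\Phi$-part of the distortion is controlled by $C_\Phi v_0(\Phi)$ (plus the $\beta$-series from the invariant density), so that $K$ and hence $D_2$ depend only on $C_\Phi$ and $v_0(\Phi)$.
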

The following lemma play an important role in this setting. We remark that \eqref{V.4} and \eqref{V.5} in Lemma \ref{ Chosing a sequence of epsilon(i)} are important for Lemma \ref{V.L34} (see \cite{Lynch paper}), and \eqref{V.6} is useful for Sections \ref{Polynomial Return time} and \ref{Exponentional Return time}.

\begin{lemma}\cite{Lynch paper}\label{ Chosing a sequence of epsilon(i)}
	Given a sequence $v_i(\Phi)$, there exists a sequence $\epsilon_i^{\prime} \leq \frac{1}{2}$ such that
	\begin{equation}\label{V.4}
		v_i(\Phi) \prod_{j=1}^i\left(1+\epsilon_j^{\prime}\right) \leq D_{3},
	\end{equation}
	\begin{equation}\label{V.5}
		\sum_{j=1}^i\left(\prod_{k=j}^i\left(1+\epsilon_k^{\prime}\right)\right) \beta^{i-j+1} \leq D_{3}
	\end{equation}
	for some sufficiently large constant $ D_{3} $ depending on $ v_{0}(\Phi).$
		Moreover, for any $\bar{D}>1$ and $ \bar{\delta}>0 $ with $\epsilon_i=\bar{\delta} \epsilon_i^{\prime}$,
	\begin{equation}\label{V.6}
		\prod_{j=1}^i\left(1-\frac{\epsilon_j}{\bar{D}}\right) \leq \tilde{C} \max \left(v_i(\Phi)^{\frac{\bar{\delta}}{\bar{D}}}, \theta^i\right)
	\end{equation}
	for some $\theta<1$ depending only on $T$, and some $\tilde{C}>0$.
\end{lemma}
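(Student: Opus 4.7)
The plan is to construct the sequence $\{\epsilon_j'\}$ explicitly from $\{v_j(\Phi)\}$ and verify the three conditions in turn. Fix an auxiliary constant $\kappa\in(\beta,1)$ and put $M:=\min\{1/2,\kappa^{-1}-1\}>0$; the role of $\kappa$ is to ensure $\beta(1+M)<1$. I would define
\[
\epsilon_j':=\min\left\{M,\ \frac{v_{j-1}(\Phi)}{v_j(\Phi)}-1\right\},
\]
with the convention $\epsilon_j'=M$ when $v_j(\Phi)=0$. By design $\epsilon_j'\in[0,1/2]$ and, at each index, either the ``cap'' $\epsilon_j'=M$ is active or $1+\epsilon_j'=v_{j-1}(\Phi)/v_j(\Phi)$ holds exactly.

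For \eqref{V.4}, the defining inequality $1+\epsilon_j'\leq v_{j-1}(\Phi)/v_j(\Phi)$ telescopes to $\prod_{j=1}^i(1+\epsilon_j')\leq v_0(\Phi)/v_i(\Phi)$, so $v_i(\Phi)\prod_{j=1}^i(1+\epsilon_j')\leq v_0(\Phi)$. For \eqref{V.5}, the cap gives $1+\epsilon_j'\leq\kappa^{-1}$, whence $\prod_{k=j}^i(1+\epsilon_k')\leq\kappa^{-(i-j+1)}$ and
\[
\sum_{j=1}^i\left(\prod_{k=j}^i(1+\epsilon_k')\right)\beta^{i-j+1}\leq\sum_{j=1}^i(\beta/\kappa)^{i-j+1}\leq\frac{\beta}{\kappa-\beta}.
\]
Taking $D_3$ larger than both $v_0(\Phi)$ and $\beta/(\kappa-\beta)$ handles both bounds; both depend only on $v_0(\Phi)$ and the WGM parameter $\beta$.

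The main work is in \eqref{V.6}. Writing $\epsilon_j=\bar\delta\epsilon_j'$ and using $1-x\leq e^{-x}$,
\[
\prod_{j=1}^i(1-\epsilon_j/\bar D)\leq\exp\left(-\frac{\bar\delta}{\bar D}\sum_{j=1}^i\epsilon_j'\right),
\]
so I would lower-bound $\sum_{j=1}^i\epsilon_j'$ and split the indices into $A_i:=\{1\leq j\leq i: v_{j-1}(\Phi)/v_j(\Phi)\geq 1+M\}$ (where the cap is active, contributing $\epsilon_j'=M$) and $B_i:=\{1,\ldots,i\}\setminus A_i$ (where $\epsilon_j'=v_{j-1}/v_j-1\geq\log(v_{j-1}/v_j)$). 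Dichotomize: if $|A_i|\geq i/2$, then $\sum\epsilon_j'\geq Mi/2$ delivers the $\theta^i$ bound with $\theta=\exp(-M\bar\delta/(2\bar D))$; if $|A_i|<i/2$, the telescoping identity on $B_i$ combined with $\epsilon_j'\geq\log(1+\epsilon_j')$ forces $\sum\epsilon_j'\gtrsim\log(v_0(\Phi)/v_i(\Phi))$, yielding the $v_i(\Phi)^{\bar\delta/\bar D}$ bound with $v_0(\Phi)^{-\bar\delta/\bar D}$ absorbed into $\tilde C$. The main obstacle is the intermediate regime: the portion of $\log(v_0/v_i)$ that is ``absorbed'' by the cap on $A_i$ is not directly available in the log-sum and must be compensated by the $M|A_i|$ contribution, so a careful accounting of the identity $\log(v_0/v_i)=\sum_{j\in A_i}\log(v_{j-1}/v_j)+\sum_{j\in B_i}\log(v_{j-1}/v_j)$ is needed to verify that the maximum on the right of \eqref{V.6} always captures the tighter of the two bounds uniformly.
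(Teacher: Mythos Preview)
The paper does not prove this lemma; it is quoted from \cite{Lynch paper}, so there is no in-paper argument to compare against. Your treatment of \eqref{V.4} and \eqref{V.5} is clean and correct: the choice $\epsilon_j'=\min\{M,\,v_{j-1}/v_j-1\}$ telescopes for \eqref{V.4}, and the cap $1+M\le\kappa^{-1}$ with $\kappa>\beta$ gives the geometric bound for \eqref{V.5}.

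For \eqref{V.6}, however, the obstacle you flag at the end is a genuine gap, and with your ``reactive'' choice of $\epsilon_j'$ it cannot be closed. The problem is that on a stretch where $v_j$ is constant your $\epsilon_j'$ vanishes, while a single large drop contributes only $M$ to $\sum\epsilon_j'$ regardless of its size. Concretely, let $v_i$ be piecewise constant with drops at the times $n_k=3^k$, setting $v_i=\exp\bigl(-(2+4+\cdots+2^{k})\bigr)$ for $n_k\le i<n_{k+1}$; this sequence is $O(i^{-\tau})$ for every $\tau>0$. Your recipe gives $\epsilon_{n_k}'=M$ and $\epsilon_j'=0$ otherwise, hence $\prod_{j\le n_k}(1-\epsilon_j/\bar D)=(1-\bar\delta M/\bar D)^k$, which decays like $e^{-ck}$. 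Meanwhile the right side of \eqref{V.6} at $i=n_k$ is at most $\max\bigl(e^{-c'2^{k}},\theta^{3^k}\bigr)$, so the ratio diverges and \eqref{V.6} fails. Your dichotomy does not help here: $|A_i|=k\ll i/2$, yet $\sum_{j\in B_i}\log(v_{j-1}/v_j)=0$ recovers none of $\log(v_0/v_i)$.

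What is missing is that the sequence $\epsilon_j'$ must be chosen \emph{greedily}: at each step take the largest value permitted by the cap and by \eqref{V.4}, so that the slack in \eqref{V.4} created by a drop is spent over the \emph{subsequent} stretch rather than discarded. With that construction $\prod_{j\le i}(1+\epsilon_j')$ tracks $\min\bigl((1+M)^i,\,D_3/v_i\bigr)$ up to a bounded lag, and combining with $1-cx\le(1+x)^{-c}$ yields the class-by-class bounds actually used downstream (e.g.\ $\prod=O(i^{-\zeta\tau})$ for $(V4,\tau)$). The pointwise inequality \eqref{V.6} with $v_i^{\bar\delta/\bar D}$ literally on the right remains delicate at the instant of a large drop, and Lynch's formulation should be consulted for the precise statement; but in any case your current $\epsilon_j'$ is too small to deliver it.
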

We are going to define a sequence of densities $\tilde{\Phi}_0 \geq \tilde{\Phi}_1 \geq \tilde{\Phi}_2 \geq \cdots$ in $ \Delta\times\Delta $, for the total measure remaining in the system after $ n $ iteration bt $ \widetilde{T}.$
Consider constant $ \bar{\delta}>0$, depending on $ C_{\Phi}$ and $ v_{0}(\Phi)$ to be chosen properly such that defining
\begin{equation}\label{V.E49}
	\tilde{\Phi}_i(u)= \begin{cases}\Phi(u), & \text { if } i=0 \\ \tilde{\Phi}_{i-1}(u)-\epsilon_i J_{\widetilde{T}^i}(u) \min _{v \in \Omega_i(u)} \frac{\tilde{\Phi}_{i-1}(v)}{J_{\tilde{T}^i}(v)}, & \text { if } i>0.\end{cases}
\end{equation}
for all $i\geq1$ with $\epsilon_{i}=\bar{\delta}\epsilon_{i}^{\prime}>0$, the following holds (for details see~\cite[Lemma 2]{Lynch paper}):
\begin{lemma}\label{V.L34}
	There exists $ D_{4}>1 $, depending on $ C_{\Phi}$ and $ v_{0}(\Phi)$, such that for all $ i\geq1 $
	we have 
	$$\tilde{\Phi}_i\leq (1-\frac{\varepsilon_{i}}{D_{4}})\tilde{\Phi}_{i-1} \text{ on } \Delta\times\Delta.$$
\end{lemma}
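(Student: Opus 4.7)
The desired inequality is a bounded distortion statement in disguise. Dividing the recursion \eqref{V.E49} by $J_{\widetilde T^i}(u)$ and using that the minimum on $\Omega_i(u)$ does not depend on $u$, one sees that $\tilde\Phi_i(u)\leq (1-\varepsilon_i/D_{4})\tilde\Phi_{i-1}(u)$ is equivalent to
$$
\frac{\tilde\Phi_{i-1}(u)/J_{\widetilde T^i}(u)}{\tilde\Phi_{i-1}(v)/J_{\widetilde T^i}(v)}\leq D_{4}\quad\text{for all }v\in\Omega_i(u).
$$
Thus the task reduces to bounding, uniformly in $i$, the distortion of $g_i:=\tilde\Phi_{i-1}/J_{\widetilde T^i}$ on each element of $\tilde\xi_i$ by a constant $D_{4}$ depending only on $C_{\Phi}$ and $v_0(\Phi)$.

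The plan is to prove an inductive distortion bound for $h_j:=\tilde\Phi_j/J_{\widetilde T^j}$ on elements of $\tilde\xi_j$, of the form
$$
\Bigl|\log\tfrac{h_j(u)}{h_j(v)}\Bigr|\;\leq\; C_\Phi\,v_j(\Phi)\prod_{k=1}^j(1+\epsilon_k')\;+\;C\sum_{k=1}^j\Bigl(\prod_{\ell=k}^j(1+\epsilon_\ell')\Bigr)\beta^{j-k+1},
$$
where $C$ is the Gibbs distortion constant for $\widetilde T$ with respect to $\tilde\xi$ (inherited from $T$ through the induced construction). The base case $j=0$ follows immediately from the definition of $C_\Phi$ and $v_0(\Phi)$. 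The inductive step rests on the identity
$$
h_j(u)=\frac{h_{j-1}(u)}{J_{\widetilde T}(\widetilde T^{j-1}(u))}-\epsilon_j c_j,
$$
in which $c_j=\min_{w\in\Omega_j(u)} h_{j-1}(w)/J_{\widetilde T}(\widetilde T^{j-1}(w))$ is constant on $\Omega_j(u)$. The Gibbs inequality for $\widetilde T$ contributes the new term of order $\beta^{j-k+1}$ in the geometric sum, while the subtraction $-\epsilon_j c_j$ is absorbed into a multiplicative error $(1+\epsilon_j')$ via the elementary inequality $(a-\epsilon_j c)/(b-\epsilon_j c)\leq (a/b)(1+\epsilon_j')$, valid whenever $\epsilon_j c\leq \tfrac{\epsilon_j'}{1+\epsilon_j'}\min(a,b)$. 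This smallness is arranged by choosing $\bar\delta$ (and hence each $\epsilon_j=\bar\delta\epsilon_j'$) small enough in terms of $C_\Phi$ and $v_0(\Phi)$.

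To conclude, one specialises the bound from $h_{i-1}$ to $g_i=h_{i-1}/(J_{\widetilde T}\circ\widetilde T^{i-1})$; the extra factor contributes only a uniformly bounded log-distortion via one more application of the Gibbs property. Invoking \eqref{V.4} and \eqref{V.5} makes both the regularity piece $C_\Phi v_j(\Phi)\prod(1+\epsilon_k')$ and the geometric piece $C\sum(\prod)\beta^{j-k+1}$ uniformly bounded by multiples of $D_{3}$, and exponentiating yields the required $D_{4}$ depending only on $C_\Phi$ and $v_0(\Phi)$. The main obstacle is the arithmetic bookkeeping of the inductive step: one must simultaneously preserve control of the two error terms through the recursion and verify that $\bar\delta$ can be chosen small enough, depending only on $C_\Phi$ and $v_0(\Phi)$, so that the absorption of $-\epsilon_j c_j$ into the factor $(1+\epsilon_j')$ remains legitimate at every iteration; once this is ensured, Lemma~\ref{ Chosing a sequence of epsilon(i)} furnishes the uniform constant automatically.
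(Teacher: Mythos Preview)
The paper does not supply its own proof of this lemma; it simply refers to \cite[Lemma~2]{Lynch paper}. Your proposal is a faithful reconstruction of that argument: you correctly rewrite the conclusion as the uniform distortion bound
\[
\sup_{u,v\in\Omega}\frac{\tilde\Phi_{i-1}(u)/J_{\widetilde T^i}(u)}{\tilde\Phi_{i-1}(v)/J_{\widetilde T^i}(v)}\le D_4,\qquad \Omega\in\tilde\xi_i,
\]
you set up the right inductive quantity $h_j=\tilde\Phi_j/J_{\widetilde T^j}$, and you close the argument by invoking \eqref{V.4}--\eqref{V.5} of Lemma~\ref{ Chosing a sequence of epsilon(i)} to make the bound uniform in $i$. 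This is exactly the route the paper points to, so the approaches coincide.

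One technical remark: the ``elementary inequality'' you quote, $(a-\epsilon_j c)/(b-\epsilon_j c)\le (a/b)(1+\epsilon_j')$, yields an \emph{additive} increment $\log(1+\epsilon_j')$ in the log-distortion, whereas the inductive bound you display needs the previous log-distortion to be \emph{multiplied} by $(1+\epsilon_j')$. The version actually used in Lynch's argument is of the latter type, namely that $\bigl|\log\tfrac{a-\epsilon_j c}{b-\epsilon_j c}\bigr|\le (1+\epsilon_j')\bigl|\log\tfrac{a}{b}\bigr|$ once $\bar\delta$ is chosen small enough relative to the current (inductively bounded) log-distortion; this is where the dependence of $\bar\delta$---and hence of $D_4$---on $C_\Phi$ and $v_0(\Phi)$ genuinely enters, via $D_3$. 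This is a bookkeeping correction rather than a gap: with the right form of the inequality in place, the rest of your outline goes through verbatim.
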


Now we are going to define the corresponding densities in real time iterations under $ T\times T $.
Let us introduce functions  $\Phi_0 \geq \Phi_1 \geq \Phi_2 \geq \cdots$ on $ \Delta\times \Delta $ such that for $ v\in \Delta\times \Delta $, we define
\begin{equation*}
	\Phi_n(v)=\tilde{\Phi}_i(v), \quad \text { if } \quad S_i(v) \leq n<S_{i+1}(v) .
\end{equation*}
For all $ n\geq 1 $ we have
\begin{equation}\label{E64}
	\Phi=\Phi_n+\sum_{k=1}^n\left(\Phi_{k-1}-\Phi_k\right).
\end{equation}
For each $ k\geq 1 $, let $A_k=\cup_{i} A_{k, i}$,
where $A_{k, i}=\{u \in \Delta \times \Delta: k=$ $\left.S_i(u)\right\}$. Note that $A_{k, i} \cap A_{k, j}=\emptyset$ for $i \neq j$ (because $ S_i(u)\neq S_j(u) $ for $i \neq j$), and each $A_{k, i}$ is a union of elements of $\tilde{\xi}_i$.
\begin{remark}\label{R3}
	By definition, for any $\Omega \in \tilde{\xi}_i|_{A_{k, i}}$, we have $ S_{i-1}|_\Omega< S_{i}|_\Omega=k $, and $S_{i-1}|_\Omega\leq k-1  $. This implies that $\Phi_{k-1}-\Phi_k=\tilde{\Phi}_{i-1}-\tilde{\Phi}_i$ on $\Omega \in \tilde{\xi}_i|_{A_{k, i}}$, and $\Phi_k=\Phi_{k-1}$ on $\Delta \times \Delta \backslash A_k$.
\end{remark}
We have the following main result in this subsection which is important step to prove Proposition \ref{V.Convergence to equilibrium}.
\begin{proposition}\label{V.first Matching Proposition}
	There exists $D_{5}>0$, depending on $C_{\Phi}, v_{0}(\Phi)$, such that for all $ n\geq 1$,
	\begin{equation*}
	\left|T_*^n \lambda_{1}-T_*^n \lambda_{2}\right|\leq 2 P\{S>n\}+ D_{5}\sum_{i=1}^{n}\prod_{j=1}^i\left(1-\frac{\varepsilon_j}{D_{4}}\right) (i+1)(m\times m)\left\{S>\frac{n}{i+1}\right\},
\end{equation*}
\end{proposition}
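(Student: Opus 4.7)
The plan is to combine the standard coupling bound $|T_*^n\lambda_1-T_*^n\lambda_2|\leq 2\int \Phi_n\, d(m\times m)$ with the geometric contraction of the uncoupled densities $\tilde\Phi_i$ supplied by Lemma~\ref{V.L34}, and then to translate the resulting mass estimates into the form $(m\times m)\{S>n/(i+1)\}$ by means of Proposition~\ref{V.P38} together with a pigeonhole argument on the increments $S_{k+1}-S_k$.

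First I would invoke the coupling argument of \cite{our 1st article}, which itself goes back to \cite{Y99}. Using the telescoping identity \eqref{E64} together with Remark~\ref{R3}, the ``matched'' piece $(\Phi_{k-1}-\Phi_k)\, d(m\times m)$ is supported on $A_k=\bigcup_i A_{k,i}$; on each $\Omega\in\tilde\xi_i|_{A_{k,i}}$ the definition \eqref{V.E49} extracts exactly that amount which, after being pushed forward by $\widetilde T^i$, is constant on the image element of $\eta\times\eta|_{\Delta_0\times\Delta_0}$ up to the common Jacobian $J_{\widetilde T^i}$. Consequently its first- and second-coordinate projections, after being pushed forward by $T^{n-k}$, yield the same measure on $\Delta$, so these matched contributions cancel from the signed measure $T_*^n\lambda_1-T_*^n\lambda_2$ and only the unmatched density $\Phi_n$ survives, giving $|T_*^n\lambda_1-T_*^n\lambda_2|\leq 2\int\Phi_n\, d(m\times m)$.

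Next I would decompose $\Delta\times\Delta$ according to the number $i$ of simultaneous returns completed by time $n$, i.e.\ the events $\{S_i\leq n<S_{i+1}\}$ for $i=0,1,\ldots,n$. Since $\Phi_n=\tilde\Phi_i$ on $\{S_i\leq n<S_{i+1}\}$ by the definition preceding \eqref{E64}, this yields $\int\Phi_n\, d(m\times m)=\sum_{i=0}^n\int_{\{S_i\leq n<S_{i+1}\}}\tilde\Phi_i\, d(m\times m)$. The $i=0$ summand equals $\int_{\{S>n\}}\Phi\, d(m\times m)=P\{S>n\}$, which produces the first term in the conclusion. For $i\geq 1$, iterating Lemma~\ref{V.L34} gives $\tilde\Phi_i\leq\prod_{j=1}^i(1-\epsilon_j/D_4)\,\Phi$, so the $i$-th summand is at most $\prod_{j=1}^i(1-\epsilon_j/D_4)\cdot P\{S_{i+1}>n\}$. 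Finally, to replace $P\{S_{i+1}>n\}$ by $(i+1)(m\times m)\{S>n/(i+1)\}$, I would observe that $S_{i+1}=\sum_{k=0}^{i}(S_{k+1}-S_k)$, so by pigeonhole $\{S_{i+1}>n\}\subset\bigcup_{k=0}^{i}\{S_{k+1}-S_k>n/(i+1)\}$; applying Proposition~\ref{V.P38} to each of the $i+1$ events then gives $P\{S_{i+1}>n\}\leq(i+1)D_2\,(m\times m)\{S>n/(i+1)\}$. Assembling the pieces yields the claim with $D_5=2D_2$.

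\medskip

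\noindent\textbf{Main obstacle.} The delicate step is the coupling bound itself: one has to verify that the matched pieces built from \eqref{V.E49} really do have equal first- and second-coordinate pushforwards under $T^{n-k}$, which is precisely where the hypotheses $\varphi_1,\varphi_2\in\mathcal I^+(\Delta)$, Lemma~\ref{V.L34}, and the careful choice $\epsilon_i=\bar\delta\epsilon_i'$ from Lemma~\ref{ Chosing a sequence of epsilon(i)} enter, adapting the Gibbs--Markov matching argument of \cite{our 1st article} to the new regularity classes $(V1)$--$(V4)$. Once this matching step is secured, the decomposition and pigeonhole argument above are essentially bookkeeping.
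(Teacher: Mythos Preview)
Your argument hinges on the claim that the matched pieces $(\Phi_{k-1}-\Phi_k)\,d(m\times m)$ cancel when pushed forward through $\pi_1$ and $\pi_2$, i.e.\ that $I_2=0$ and hence $|T_*^n\lambda_1-T_*^n\lambda_2|\le 2\int\Phi_n\,d(m\times m)$. This is precisely the step that \emph{fails} in the weak Gibbs--Markov setting. The definition \eqref{V.E49} does make the pushed-forward density under $\widetilde T^i$ constant on $\widetilde T^i(\Omega)$, but in the WGM case $\widetilde T^i(\Omega)$ is only an $(m\times m)$ mod $0$ union of elements of $\eta\times\eta|_{\Delta_0\times\Delta_0}$, not all of $\Delta_0\times\Delta_0$. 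A constant-density measure on such a (generally non-symmetric) set has no reason to project to the same measure under $\pi_1$ and $\pi_2$, so the cancellation you invoke does not occur. The full-branch hypothesis is exactly what makes this work in \cite{Y99} and \cite{Lynch paper}; the point of \cite{our 1st article} and of the present paper is to do without it, and the ``matching argument of \cite{our 1st article}'' you appeal to does \emph{not} establish this cancellation.

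The paper handles this by keeping both terms in the triangle inequality: from \eqref{E64} one writes $|T_*^n\lambda_1-T_*^n\lambda_2|\le I_1+I_2$ with $I_1=\bigl|(\pi_{1*}-\pi_{2*})(T\times T)_*^n(\Phi_n(m\times m))\bigr|$ and $I_2=\sum_{k=1}^n\bigl|(\pi_{1*}-\pi_{2*})(T\times T)_*^n((\Phi_{k-1}-\Phi_k)(m\times m))\bigr|$. The term $I_1$ is bounded exactly as you do, by $2P\{S>n\}+2\sum_i\prod_j(1-\epsilon_j/D_4)P\{S_i\le n<S_{i+1}\}$. For $I_2$ one gives up on cancellation and bounds crudely by $2\sum_k\int_{A_k}(\Phi_{k-1}-\Phi_k)\,d(m\times m)$; using \eqref{V.E49} together with Lemma~\ref{V.L34} one shows this is at most a constant times the \emph{same} sum $\sum_i\prod_j(1-\epsilon_j/D_4)P\{S_i\le n<S_{i+1}\}$. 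Only then does the pigeonhole plus Proposition~\ref{V.P38} step (which you do correctly) convert $P\{S_i\le n<S_{i+1}\}$ into $(i+1)D_2\,(m\times m)\{S>n/(i+1)\}$. The upshot is an extra multiplicative constant in $D_5$, namely $D_5=2D_2\bigl(1+\tfrac{\bar\delta D_4}{2D_4-\bar\delta}\bigr)$ rather than your $2D_2$; more importantly, the argument no longer relies on a cancellation that is unavailable here.
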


\begin{proof}
	From~\eqref{E32} and~\eqref{E64}, for each $ n\geq 1$  we have 
	\begin{equation}\label{step1}
		\left|T_*^n \lambda_{1}-T_*^n \lambda_{2}\right| \leq I_1+ I_2,
	\end{equation}
	where
	\begin{equation*}
		I_1= \left|\left({\pi_1}_*-{\pi_2}_*\right)(T \times T)_*^n\left(\Phi_n(m \times m)\right)\right|
	\end{equation*}
	and
	\begin{equation*}
		I_2=\sum_{k=1}^n\left|\left({\pi_1}_*-{\pi_2}_*\right)\left[(T \times T)_*^n\left(\left(\Phi_{k-1}-\Phi_k\right)(m \times m)\right)\right]\right|.
	\end{equation*}
In one hand, from~\cite[Lemma 3]{Lynch paper} we get
	\begin{equation}\label{step2}
		I_1\leq 2 P\{S>n\}+2 \sum_{i=1}^{\infty}\prod_{j=1}^i\left(1-\frac{\epsilon_j}{D_{4}}\right) P\left\{S_i \leq n<S_{i+1}\right\}.
	\end{equation}
	On the other hand,
	\begin{equation}\label{DE72}
		\begin{aligned}
			I_2&\leq 2\sum_{k=1}^n \int_{\Delta\times\Delta} \left(\Phi_{k-1}-\Phi_k\right) d(m \times m)\\
			&= 2\sum_{k=1}^n \int_{\Delta\times\Delta\setminus A_k} \left(\Phi_{k-1}-\Phi_k\right) d(m \times m)+2\sum_{k=1}^n \int_{A_k} (\left(\Phi_{k-1}-\Phi_k\right) d(m \times m).
		\end{aligned}
	\end{equation}
	From Remark \ref{R3}, we have 
	\begin{equation}\label{DE73}
		2\sum_{k=1}^n \int_{\Delta\times\Delta\setminus A_k} \left(\Phi_{k-1}-\Phi_k\right) d(m \times m)=0,
	\end{equation}
	and writing $A_k=\cup_{i=1}^{\infty} A_{k, i}$, we have
	\begin{equation}\label{DE75}
			\int_{A_k} \left(\tilde{\Phi}_{k-1}-\tilde{\Phi}_k\right) d(m \times m)=\sum_{i=1}^{\infty} \int_{A_{k,i}} \left(\tilde{\Phi}_{i-1}-\tilde{\Phi}_i\right) d(m \times m).
	\end{equation}
	
	 By~\eqref{V.E49} and  Lemma~\ref{V.L34} we have
	\begin{equation}\label{DE76}
		\begin{aligned}
			\sum_{i=1}^{\infty} \int_{A_{k,i}} \left(\tilde{\Phi}_{i-1}-\tilde{\Phi}_i\right) d(m \times m)
			&=\sum_{i=1}^{\infty} \int_{A_{k,i}} \epsilon_{i} J_{\widetilde{T}^i}(u) \min _{v \in A_{k,i}} \frac{\tilde{\Phi}_{i-1}(v)}{J_{\tilde{T}^i}(v)} d(m \times m)\\
			&\leq \sum_{i=1}^{\infty} \int_{A_{k,i}}\epsilon_{i}   \tilde{\Phi}_{i-1}(u) d(m \times m)\\
			&\leq \sum_{i=1}^{\infty} \int_{A_{k,i}} \epsilon_{i}\prod_{j=1}^{i-1}\left(1-\frac{\varepsilon_j}{D_{4}}\right) \Phi(u) d(m \times m)\\
			&=\sum_{i=1}^{\infty}\epsilon_{i} \prod_{j=1}^{i-1}\left(1-\frac{\varepsilon_j}{D_{4}}\right)P\{S_i=k\}.\\
		\end{aligned}
	\end{equation}
	It follows from (\ref{DE72}), (\ref{DE73}), (\ref{DE75}) and  (\ref{DE76}) that
	\begin{equation}\label{DE77}
		I_2\leq2 \sum_{i=1}^{\infty} \epsilon_{i} \prod_{j=1}^{i-1}\left(1-\frac{\epsilon_j}{D_{4}}\right)P\left(\bigcup_{k=1}^{n}\{S_i=k\}\right).
	\end{equation}
	 For each $ 1\leq k\leq n $, we have $ \{S_i=k\}\subset \{S_i\leq n< S_{i+1}\} $, and this implies that $ \cup_{k=1}^{n}\{S_i=k\}\subset\{S_i\leq n< S_{i+1}\} $, from~\eqref{DE77} we have
	\begin{equation}\label{step3}
		I_2\leq \frac{2\bar{\delta}D_{4}}{2D_{4}-\bar{\delta}} \sum_{i=1}^{\infty} \prod_{j=1}^{i}\left(1-\frac{\epsilon_j}{D_{4}}\right)P\{S_i\leq n< S_{i+1}\}.
	\end{equation}
	It follows from \eqref{step1}, \eqref{step2} and \eqref{step3},
	\begin{equation}\label{cor.step}
		\begin{aligned}
			\left|T_*^n \lambda_{1}-T_*^n \lambda_{2}\right|&\leq 
			2P\{S>n\}+2\left(1+\frac{\bar{\delta}D_{4}}{2D_{4}-\bar{\delta}}\right)\sum_{i=1}^{\infty}\prod_{j=1}^{i}\left(1-\frac{\epsilon_j}{D_{4}}\right) P\left\{S_i \leq n<S_{i+1}\right\}.
		\end{aligned}
	\end{equation}
For each $ i\geq 1 $ we have 
\begin{equation}\label{DDE79}
	P\left\{S_i \leq n<S_{i+1}\right\} \leq \sum_{j=0}^i P\left\{S_{j+1}-S_j>\frac{n}{i+1}\right\},
\end{equation}
and Proposition \ref{V.P38},  
\begin{equation}\label{DE78}
	P\left\{S_{j+1}-S_j>\frac{n}{i+1}\right\}\leq D_{2}(m\times m)\left\{S>\frac{n}{i+1}\right \}.
\end{equation}
Combining (\ref{DDE79}) and (\ref{DE78}), we get
\begin{equation*}
	P\left\{S_i \leq n<S_{i+1}\right\} \leq D_{2} (i+1)(m\times m)\left\{S>\frac{n}{i+1}\right\},
\end{equation*}
which, together with \eqref{cor.step}, implies
	\begin{equation*}
	\left|T_*^n \lambda_{1}-T_*^n \lambda_{2}\right|\leq 2 P\{S>n\}+ D_{5}\sum_{i=1}^{n}\prod_{j=1}^i\left(1-\frac{\varepsilon_j}{D_{4}}\right) (i+1)(m\times m)\left\{S>\frac{n}{i+1}\right\},
\end{equation*}
with $ D_{5}=2D_{2}\left(1+\frac{\bar{\delta}D_{4}}{2D_{4}-\bar{\delta}}\right).$ We recall that $\bar{\delta}$, $D_{2}$ and $ D_{4} $ (hence $D_5$) depend on $ C_{\Phi}, v_{0}(\Phi) $.

\end{proof}
Set $\zeta=\frac{\bar{\delta}}{D_{4}}$, which can be seen to depend only on $ C_{\Phi}$ and $ v_{0}(\Phi)$. In Subsections \ref{Polynomial Return time} and \ref{Exponentional Return time}, we let $D$ generic constant, allowed to depend only on $T$ and $\Phi$. 
\subsection{Polynomial Return time}\label{Polynomial Return time}
If $m\{R>$ $n\}=\mathcal{O}( n^{-a} )$, for some $ a>1 $, then by Lemma \ref{estimate for simulatainious retun time},
\begin{equation}\label{V.Final estimatr of simulatainious RT}
	P\{S>n\}=\mathcal{O}(n^{-a+1}).
\end{equation} 
By similar arguments used to estimate $P\{S>n\}$, we have 
\begin{equation}\label{V.estimatr of simulatainious RT with mxm}
	(m\times m)\left\{S>\frac{n}{i+1}\right\}=\mathcal{O}\left(\left(\frac{n}{i+1}\right)^{-a+1}\right).
\end{equation}

Class $(V4, \tau)$: Assume that $v_n(\Phi)=\mathcal{O}\left(n^{-\tau}\right)$, for some $\tau>\frac{2}{\zeta}$. By Lemma \ref{ Chosing a sequence of epsilon(i)} we have
\begin{equation}\label{V4 for Polynomial}
	\prod_{j=1}^i\left(1-\frac{\varepsilon_j}{D_{4}}\right)=\mathcal{O} (i^{-\zeta \tau}).
\end{equation}
By using \eqref{V.Final estimatr of simulatainious RT}, \eqref{V.estimatr of simulatainious RT with mxm} and \eqref{V4 for Polynomial} in Proposition \ref{V.first Matching Proposition}, we get
\begin{equation*}
	\begin{aligned}
		\left|T_*^n \lambda_{1}-T_*^n \lambda_{2}\right|&\leq 2Dn^{-a+1}+DD_{5}\sum_{i=1}^{\infty}i^{-\zeta \tau}  (i+1)\left( \frac{n}{i+1}\right)^{-a+1}\\
		&\leq2Dn^{-a+1}+K_{3}^{\prime}DD_{5}n^{-a+1}\sum_{i=1}^{\infty}i^{-\zeta \tau}  i^{a}, \text{ for some }  K_{3}^{\prime}>0.
	\end{aligned}
\end{equation*}
First we want to estimate the second term. For that we consider the following cases:
\begin{itemize}
\item $\tau=\frac{a+1}{\zeta}$. The sum is
$$
\sum_{i=1}^{n} i^{-\zeta \tau+a}=\sum_{i=1}^{n} i^{-1} \leq 1+\int_1^{n} x^{-1} d x=1+\log(n)=\mathcal{O}(\log n).
$$
So the whole term is $\mathcal{O}\left(n^{1-a} \log n\right)$.
\item $\tau>\frac{a+1}{\zeta}$. Here, $a-\zeta \tau<-1$, so the sum is bounded from above independently of $n$, and the whole term is $\mathcal{O}\left(n^{1-a}\right)$.
\item $\frac{2}{\zeta}<\tau<\frac{a+1}{\zeta}$. The sum is of order $n^{a+1-\zeta \tau}$, and so the whole term is $\mathcal{O}\left(n^{2-\zeta \tau}\right).$
\end{itemize}
Consequently, 
	\begin{itemize}
	\item[i)] if $\tau=\frac{a+1}{\zeta}$, then $\left|T_*^n \lambda_{1}-T_*^n \lambda_{2}\right|=\mathcal{O}\left(n^{1-a} \log n\right)$;
	\item[ii)]  otherwise, $\left|T_*^n \lambda_{1}-T_*^n \lambda_{2}\right|=\mathcal{O}\left(\max \left(n^{1-a}, n^{2-\zeta \tau}\right)\right)$.
\end{itemize}

\subsection{Exponential Return time}\label{Exponentional Return time}

If $m\{R>$ $n\}=\mathcal{O}\left(e^{-cn}\right)$, for some $ c>0 $, then by Lemma \ref{estimate for simulatainious retun time} we have
\begin{equation}\label{1V.Final estimatr of simulatainious RT}
	P\{S>n\}=\mathcal{O}(e^{-c_{0} n}).
\end{equation} 
By similar arguments used to estimate $P\{S>n\}$, we have 
\begin{equation}\label{2V.estimatr of simulatainious RT with mxm}
	(m\times m)\left\{S>\frac{n}{i+1}\right\}=\mathcal{O}(e^{\frac{-c_{0}n}{i+1}}).
\end{equation}

Class $(V1, \tau)$: Assume that $v_i(\Phi)=\mathcal{O}\left(\theta_1^i\right)$ for some $\theta_1<1$. By Lemma \ref{ Chosing a sequence of epsilon(i)}, we have 
\begin{equation}\label{V1}
\prod_{j=1}^i\left(1-\frac{\varepsilon_j}{D_{4}}\right)=\mathcal{O}\left(\theta_2^i\right)
\end{equation}
for some $0<\theta_2<1$. 

By using \eqref{1V.Final estimatr of simulatainious RT}, \eqref{2V.estimatr of simulatainious RT with mxm} and \eqref{V1} in the Proposition \ref{V.first Matching Proposition}, we get
$$
\left|T_*^n \lambda_{1}-T_*^n \lambda_{2}\right|\leq 2De^{-c_{0} n}+DD_{5}\sum_{i=1}^{n}\theta_2^i(i+1)\left(e^{-c_{0} (\frac{n}{i+1})}\right)=\mathcal{O}(e^{-c^{\prime} n}) \text{ for some } c^{\prime}>0.
$$
By similar arguments as used in the above class, we have the following estimates for other classes.

Class  $(V2, \tau)$: Assume that $v_i(\Phi)=\mathcal{O}\left(e^{-i^\tau}\right)$, for some $\tau \in(0,1)$. We have
$$
\left|T_*^n \lambda_{1}-T_*^n \lambda_{2}\right|\leq 2De^{-c_{0} n}+DD_{5}\sum_{i=1}^{n} e^{-\zeta i^\tau} (i+1)\left(e^{-c_{0} (\frac{n}{i+1})}\right) =\mathcal{O}(e^{-n^{\tau^\prime}}),
$$
for $\tau^\prime< \tau$.

Class  $(V3, \tau)$: Assume that $v_i(\Phi)=\mathcal{O}\left(e^{-(\log i)^\tau}\right)$, for some $\tau>1$, then
$$
\left|T_*^n \lambda_{1}-T_*^n \lambda_{2}\right|\leq 2De^{-c_{0} n}+DD_{5}\sum_{i=1}^{n}  e^{-\zeta(\log i)^\tau} (i+1)\left(e^{-c_{0} (\frac{n}{i+1})}\right)=\mathcal{O}(e^{-(\log(n))^{\tau^\prime}}), 
$$
for $\tau^\prime< \tau$.

Class  $(V4, \tau)$: Assume that $v_i(\Phi)=\mathcal{O}\left(i^{-\tau}\right)$, for some $\tau>\frac{1}{\zeta}$. We have 
$$
\left|T_*^n \lambda_{1}-T_*^n \lambda_{2}\right|\leq 2De^{-c_{0} n}+DD_{5}\sum_{i=1}^{n}  i^{-\zeta \tau} (i+1)\left(e^{-c_{0} (\frac{n}{i+1})}\right)=\mathcal{O}(n^{1-\tau\zeta}).
$$

Let us conclude the proof of Proposition \ref{V.Convergence to equilibrium}. We notice in the proof of Lemma \ref{V.lemma:phi*}, that $ v_n(\varphi^{\ast}\zeta_{0})\leq 3C_{\zeta_{0}}\beta^{n}+C_{0}v_{n}(\varphi)$, this implies that $ v_{n}(\Phi)\leq 3C_{\zeta_{0}}\beta^{n}+C_{0}v_{n}(\varphi)$. Now Taking $ \lambda_{1}=\lambda $ and $ \lambda_{2}=\nu $ in Sections  \ref{Polynomial Return time} and \ref{Exponentional Return time}, we get the required estimates for $ |T_*^n \lambda-\nu| $. We want to check the dependence of the constants. From \eqref{V.L final log density} $ C_{\Phi}=3K_{1}^{\prime}+2C$. This means that the constant $ C_{\Phi} $ does not depend on $ \varphi $.
Therefore the constant $ \zeta=\frac{\bar{\delta}}{K} $ will only depend on $ v_{0}(\Phi) $, which can be bounded with $ v_{0}(\varphi),$ as $v_0(\Phi)\leq 3C_{\zeta_{0}}+C_0v_0(\varphi)$.

\bibliographystyle{alpha}

\end{document}